\newtheorem{theorem}{Theorem}
\newtheorem{corollary}{Corollary}
\newtheorem{proposition}{Proposition}
\newtheorem*{remark}{Remark}        
\numberwithin{equation}{section}
\newcommand{\ds}{\displaystyle}
\newcommand{\dv}{\text{div}}
\begin{document}

\title{Asymptotic and optimal Liouville properties for Wolff type integral systems}

\author{John Villavert\footnote{email: john.villavert@gmail.com, john.villavert@utrgv.edu} \\
[0.2cm] {\small School of Mathematical and Statistical Sciences}\\
{\small University of Texas Rio Grande Valley}\\
{\small Edinburg, Texas 78539 USA}
}
 
\date{}

\maketitle

\begin{abstract} 
This article examines the properties of positive solutions to fully nonlinear systems of integral equations involving Hardy and Wolff potentials. The first part of the paper establishes an optimal existence result and a Liouville type theorem for the integral systems. Then, the second part examines the decay rates of positive bound states at infinity. In particular, a complete characterization of the asymptotic properties of bounded and decaying solutions is given by showing that such solutions vanish at infinity with two principle rates: the slow decay rates and the fast decay rates. In fact, the two rates can be fully distinguished by an integrability criterion. As an application, the results are shown to carry over to certain systems of quasilinear equations. 
\end{abstract}

\noindent{\bf{Keywords}}: Liouville theorem; quasilinear system; Wolff potential. \medskip

\noindent{\bf{MSC2010}}: Primary: 45G05, 45G15, 45M05; Secondary: 35B40, 35J62.

\section{Introduction}\label{Introduction}

This article studies the following class of fully nonlinear systems of integral equations with variable coefficients involving Hardy terms and Wolff potentials,
\begin{equation}\label{Wolff}
  \left\{\begin{array}{l}
    u(x) = c_{1}(x)W_{\beta,\gamma}(|y|^{\sigma_1}v^q)(x),\,~ x \in \mathbb{R}^n, \medskip \\
    v(x) = c_{2}(x)W_{\beta,\gamma}(|y|^{\sigma_2}u^p)(x),\,~ x \in \mathbb{R}^n.
  \end{array}
\right.
\end{equation}
The Wolff potential of a non-negative Borel measure $\mu$ is defined by
\begin{equation*}
W_{\beta,\gamma}(\mu) = \int_{0}^{\infty} \Big( \frac{\mu(B_{t}(x))}{t^{n-\beta\gamma}} \Big)^{\frac{1}{\gamma - 1}} \,\frac{dt}{t},
\end{equation*}
where $x \in \mathbb{R}^n$, $n \geq 3$, $\gamma > 1$, $\beta > 0$, $\beta\gamma < n$ and $B_{t}(x) \subset \mathbb{R}^n$ is the open ball of radius $t$ centered at $x$. Thus, if $d\mu = f \,dx$ where $f \in L^{1}_{loc}(\mathbb{R}^n)$ and $f \geq 0$, then 
\begin{equation*}
W_{\beta,\gamma}(f)(x) = \int_{0}^{\infty} \Big( \frac{\int_{B_{t}(x)} f(y) \,dy}{t^{n-\beta\gamma}} \Big)^{\frac{1}{\gamma - 1}} \,\frac{dt}{t}.
\end{equation*}

{\bf Convention:} Unless further specified, when considering system \eqref{Wolff} we always assume the following:
\begin{equation}\label{convention}
  \left\{\begin{array}{l}
 p,q > 0,\, \gamma \in (1,2], \, \sigma_i \text{ belongs to } (-\beta\gamma, \infty), \\
 \text{and the coefficients } c_{i}(x) \text{ are double bounded functions. }
  \end{array}
\right.
\end{equation}
Here we say a function $c(x)$ is a double bounded function if there exists a positive constant $C$ such that $1/C \leq c(x) \leq C$ for $x \in \mathbb{R}^n$. We say $(u,v)$ is a {\bf solution} of system \eqref{Wolff} if $u,v \in L^{1}_{loc}(\mathbb{R}^n)$ are non-negative and satisfy the integral equations for a.e. $x \in \mathbb{R}^n$. In addition, for a given positive solution, we say it is a {\bf decaying solution} if there exist positive rates $\theta_1$ and $\theta_2$ such that $u(x) \simeq |x|^{-\theta_1}$ and $v(x) \simeq |x|^{-\theta_2}$. Here, the notation $f(x) \simeq |x|^{-\theta}$ means there exists a positive constant $c$ such that $1/c \leq |x|^{\theta}f(x) \leq c$ as $|x| \rightarrow \infty$.

The goals of this paper are to establish some new results concerning the optimal existence and non-existence of positive solutions and to continue the study from \cite{Villavert:14e} on the decay properties of positive solutions for system \eqref{Wolff}. Our study on the integral equations involving the Wolff potential has roots in the qualitative analysis of elliptic partial differential equations arising from nonlinear analysis, calculus of variations, conformal geometry and mathematical physics. The prototypical example is the semilinear equation with weight,
\begin{equation}\label{weighted PDE}
 -\Delta u = |x|^{\sigma}u^p,\, u > 0 \, \text{ in } \mathbb{R}^n,
 \end{equation}
where $p > 1$ and $\sigma \in \mathbb{R}$, and we illustrate soon below how this equation is equivalent to a very simple case of system \eqref{Wolff}. Indeed, equation \eqref{weighted PDE} arises as an important stationary model for stellar cluster formation in astrophysics \cite{Henon73}. Liouville type theorems for \eqref{weighted PDE}, when combined with blow-up and rescaling arguments, lead to a priori estimates to a family of elliptic boundary value problems \cite{GS81a}, and the classification of its solutions when $\sigma = 0$ and $p = (n+2)/(n-2)$ also plays an important role in the Yamabe and prescribing scalar curvature problems. It turns out that the decay properties of solutions to equation \eqref{weighted PDE} are closely related to these other properties, and we have a fairly good picture of the asymptotic behavior of the bound states. We outline such results here for completeness sake and because the main theorems of this paper can be viewed as generalized versions for the Wolff type integral systems. It is known that equation \eqref{weighted PDE} has no solution if $\sigma \leq -2$ or when $\sigma > -2$ and $1 < p \leq \frac{n+\sigma}{n-2}$ (see \cite{GS81,MitidieriPohozaev01}). Thus, if a solution exists, then $p > \frac{n+\sigma}{n-2}$ and $\sigma > -2$ necessarily hold. This implies that $\frac{2+\sigma}{p-1} < n-2$ and this motivates the following terminology for the fast and slow decaying solutions. It is known that bound state solutions which vanish at infinity must do so with two principle rates of decay: the fast rate $u(x) \simeq |x|^{-(n-2)}$ or the slow rate $u(x) \simeq |x|^{-\frac{2+\sigma}{p-1}}$  \cite{YLi92} (see also \cite{CGS89,DdPMW08,Lei2013} and the references therein).

Let us make the connection between elliptic partial differential equations and the Wolff type integral equations more apparent. To do so, we first note that the Wolff potential $W_{\beta,\gamma}(\cdot)$ reduces to the well-known Riesz potential $I_{\alpha}(\cdot)$ multiplied by a positive constant when $\beta = 2$ and $\gamma = \alpha/2$. Namely, 
\begin{equation*}
W_{\frac{\alpha}{2},2}(f)(x) = \frac{1}{(n-\alpha)}\int_{\mathbb{R}^n} \frac{f(y)}{|x-y|^{n-\alpha}} \,dy =: \frac{1}{(n-\alpha)} I_{\alpha}(f)(x).
\end{equation*}
System \eqref{Wolff} under constant coefficients therefore includes a weighted variant of the Hardy-Littlewood-Sobolev (HLS) type integral system
\begin{equation}\label{wHLS}
  \left\{\begin{array}{l}
    u(x) = \ds\int_{\mathbb{R}^n} \frac{|y|^{\sigma_1}v^{q}(y)}{|x-y|^{n-\alpha}} \,dy, \medskip \\
    v(x) = \ds\int_{\mathbb{R}^n} \frac{|y|^{\sigma_2}u^{p}(y)}{|x-y|^{n-\alpha}} \,dy.
  \end{array}
\right.
\end{equation}
In the unweighted case, i.e., $\sigma_1,\sigma_2 = 0$, these are the Euler-Lagrange equations for a functional associated with the best constant in the HLS inequality \cite{Lieb83}. If $p = q$, $\sigma_1 = \sigma_2$ and $u \equiv v$, the integral system reduces to an integral equation, which is also associated with the Euler-Lagrange equation for the sharp Hardy-Sobolev inequality \cite{CKN84,LuZhu11,JYang15}. The HLS type integral systems are naturally associated with systems of differential equations. For example, when $\alpha = 2k$ is an even integer, $\sigma_1,\sigma_2 \leq 0$ and $p,q > 1$, system \eqref{wHLS} is equivalent to the poly-harmonic system of the H\'{e}non-Lane-Emden type \cite{CL13,Villavert:14c}:
\begin{equation}\label{wPDE}
  \left\{\begin{array}{cl}
    (-\Delta)^{k}u = |x|^{\sigma_1}v^{q}, \\
    (-\Delta)^{k}v = |x|^{\sigma_2}u^{p},
  \end{array}
\right.
\end{equation}
which reduces to equation \eqref{weighted PDE} if $k=1$, $\sigma = \sigma_1 = \sigma_2$, $p = q$ and $u \equiv v$. 

\begin{remark}
If $k = 1$, system \eqref{wPDE} is often called the H\'{e}non system when $\sigma_1,\sigma_2 > 0$, the Lane-Emden system when $\sigma_1,\sigma_2 = 0$, or the Hardy system when $\sigma_1,\sigma_2 < 0$, but we will just refer to it as the H\'{e}non-Lane-Emden system in any case.
\end{remark}
Similarly, if $\beta = 1$, system \eqref{Wolff} is closely related to the system of $\gamma$-Laplace equations of the H\'{e}non-Lane-Emden type
\begin{equation*}
  \left\{\begin{array}{cl}
    -\dv\,(|\nabla u|^{\gamma-2}\nabla u) = c_{1}(x)|x|^{\sigma_1}v^{q}, \medskip \\
    -\dv\,(|\nabla v|^{\gamma-2}\nabla v) = c_{2}(x)|x|^{\sigma_2}u^{p},
  \end{array}
\right.
\end{equation*}
and we shall describe their close relationship in more detail shortly below. Other relevant examples include more general quasilinear systems, including those involving $k$-Hessian operators (see \cite{Villavert:14e} and the references therein).

Just as we have for the prototypical elliptic equation, we show that bounded and decaying positive solutions of \eqref{Wolff} exhibit only two rates of decay: the fast decay rates and the slow decay rates. Here we say a decaying solution $(u,v)$ of system \eqref{Wolff} {\bf decays with the slow rates} as $|x| \rightarrow \infty$ if 
$$ u(x) \simeq |x|^{-q_0} \,\text{ and }\, v(x) \simeq |x|^{-p_0},$$
where
$$\textstyle q_0 = \frac{\beta\gamma(\gamma - 1 + q) + (\gamma - 1)\sigma_1 + \sigma_2 q}{pq - (\gamma-1)^2} \,\text{ and }\, p_0 = \frac{\beta\gamma(\gamma - 1 + p) + (\gamma - 1)\sigma_2 + \sigma_1 p}{pq - (\gamma-1)^2}.$$
On the other hand, we previously established the following equivalent characterization of the fast decaying solutions and whose definition is contained in the last statement of the theorem.
\begin{theorem}[Theorem 1 in \cite{Villavert:14e}]\label{fast theorem}
Let $\gamma \in (1,2]$, $q\geq p > 1$ and $\sigma_1 \leq \sigma_2 \leq 0$ and let $(u,v)$ be a positive solution of the integral system \eqref{Wolff} where 
\begin{equation}\label{non-subcritical}
q_0 + p_0 \leq \frac{n-\beta\gamma}{\gamma-1}.
\end{equation}
Then the following statements are equivalent.
\begin{enumerate}[(a)]
\item $(u,v) \in L^{r_0}(\mathbb{R}^n) \times L^{s_0}(\mathbb{R}^n)$, where
$$ r_0 = \frac{n}{q_0} \,\text{ and }\, s_0 = \frac{n}{p_0}.$$

\item $(u,v) \in L^{r}(\mathbb{R}^n) \times L^{s}(\mathbb{R}^n)$, where 
\begin{equation}\label{optimal integrability}
r > \frac{n(\gamma - 1)}{n-\beta\gamma} \,\text{ and }\, s > \max\Big\lbrace \frac{n(\gamma - 1)}{n-\beta\gamma},\, \frac{n(\gamma-1)}{p(\frac{n-\beta\gamma}{\gamma - 1}) - (\beta\gamma +\sigma_2)} \Big\rbrace.
\end{equation}
\item $(u,v)$ is bounded and decaying, and it {\bf decays with the fast rates} as $|x|\rightarrow \infty$, i.e.,
$$u(x) \simeq |x|^{-\frac{n-\beta\gamma}{\gamma-1}}$$
and
\begin{equation*}
v(x) \simeq \left\{\begin{array}{ll}
|x|^{-\frac{n-\beta\gamma}{\gamma-1}}, 							                     & \text{ if }\, p(\frac{n-\beta\gamma}{\gamma-1}) - \sigma_2 > n; \medskip \\
|x|^{-\frac{n-\beta\gamma}{\gamma-1}}(\ln |x|)^{\frac{1}{\gamma-1}},	                 & \text{ if }\, p(\frac{n-\beta\gamma}{\gamma-1}) - \sigma_2 = n; \\
|x|^{- \frac{p(\frac{n-\beta\gamma}{\gamma-1}) - (\beta\gamma + \sigma_2)}{\gamma-1}}, & \text{ if }\, p(\frac{n-\beta\gamma}{\gamma-1}) - \sigma_2 < n.
\end{array}
\right.
\end{equation*}
\end{enumerate}
\end{theorem}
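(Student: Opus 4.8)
The plan is to prove the cyclic chain of implications $(a)\Rightarrow(b)\Rightarrow(c)\Rightarrow(a)$. The engine behind the first two implications is a nonlinear Hardy--Littlewood--Sobolev type estimate for the Wolff potential: if $f\ge 0$ and $|y|^{\sigma}f\in L^{m}(\mathbb{R}^n)$ with $m$ in the admissible range dictated by $\beta\gamma$ and $\sigma$, then $W_{\beta,\gamma}(|y|^{\sigma}f)\in L^{m'}(\mathbb{R}^n)$ with $\|W_{\beta,\gamma}(|y|^{\sigma}f)\|_{L^{m'}}\lesssim\||y|^{\sigma}f\|_{L^{m}}^{1/(\gamma-1)}$, the target exponent $m'$ being forced by scaling; the Hardy weight is absorbed either through a weighted form of this inequality or via H\"older together with standard fractional integral bounds. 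Applying this to the two equations in \eqref{Wolff} and eliminating, the $u$--$v$ coupling becomes a single relation among the Lebesgue exponents, which is what produces the precise thresholds in \eqref{optimal integrability}.

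For $(a)\Rightarrow(b)$ I would run a regularity lifting (bootstrap) scheme. Writing $u=c_{1}W_{\beta,\gamma}(|y|^{\sigma_1}v^{q})$, fix a large $A>0$ and split $v^{q}=v^{q}\chi_{\{v>A\}}+v^{q}\chi_{\{v\le A\}}$, with the corresponding decomposition $u=u^{\mathrm{hi}}_{A}+u^{\mathrm{lo}}_{A}$ (and symmetrically for $v$); because $W_{\beta,\gamma}$ is only $\tfrac{1}{\gamma-1}$-homogeneous rather than linear, the two pieces are estimated separately. Since $(u,v)$ is assumed to already lie in the critical pair $L^{r_0}\times L^{s_0}$, one shows that for $A$ large the map sending a function to the Wolff potential of the ``high'' datum built from it is a contraction on $L^{r}$ for every $r$ slightly larger than $\tfrac{n(\gamma-1)}{n-\beta\gamma}$ (respectively above the maximum displayed in \eqref{optimal integrability} for the $v$-component), while the ``low'' parts $u^{\mathrm{lo}}_{A},v^{\mathrm{lo}}_{A}$ already belong to all such spaces because the truncated data are bounded and inherit the integrability of the critical datum. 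A coupled fixed point argument then places $(u,v)$ in the open ranges \eqref{optimal integrability}. This critical-exponent lifting is the crux of the theorem: at the endpoint $r_0=n/q_0$ (which in the critical case $q_0+p_0=\tfrac{n-\beta\gamma}{\gamma-1}$ coincides with the left endpoint of the range in \eqref{optimal integrability}) the gain is not handed to us by the HLS estimate, and the nonlinearity of $W_{\beta,\gamma}$ together with the coupling must be handled carefully.

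For $(b)\Rightarrow(c)$, first bootstrap to boundedness: once $r,s$ are strictly above the critical values, each pass through the integral equations via the Wolff-potential estimate lands $(u,v)$ in a strictly better pair of spaces, and finitely many iterations give $u,v\in L^{\infty}(\mathbb{R}^n)$ (a pointwise Wolff-potential bound for data in a sufficiently high $L^{m}$ handles the last step into $L^\infty$ and precludes growth at infinity). With $(u,v)$ bounded and in these spaces, the mass $M_{1}:=\int_{\mathbb{R}^n}|y|^{\sigma_1}v^{q}\,dy$ is finite ($\sigma_1\in(-\beta\gamma,0]$ is locally integrable and the decay of $v$ controls the tail), so for $|x|$ large the $u$-equation is estimated by splitting $\int_{0}^{\infty}(\cdot)\,\tfrac{dt}{t}$ at $t\simeq|x|$: for $t\gtrsim|x|$ one has $\mu(B_{t}(x))\simeq M_{1}$ and the outer piece contributes $\simeq M_{1}^{1/(\gamma-1)}|x|^{-(n-\beta\gamma)/(\gamma-1)}$ both from above and below, while the inner piece is shown to be of the same order or smaller; this gives the fast rate $u(x)\simeq|x|^{-(n-\beta\gamma)/(\gamma-1)}$. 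Substituting back, the $v$-datum $|y|^{\sigma_2}u^{p}$ decays like $|y|^{\sigma_2-p(n-\beta\gamma)/(\gamma-1)}$ at infinity, so $\int_{B_t(x)}|y|^{\sigma_2}u^{p}\,dy$ stays bounded, grows logarithmically, or grows like a power of $t$ according as $p\tfrac{n-\beta\gamma}{\gamma-1}-\sigma_2$ is $>$, $=$, or $<n$; repeating the split-at-$t\simeq|x|$ estimate in each case yields the three alternatives for $v$, the borderline one producing the factor $(\ln|x|)^{1/(\gamma-1)}$.

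Finally, $(c)\Rightarrow(a)$ is elementary: a bounded function comparable to $|x|^{-\theta}$ lies in $L^{m}(\mathbb{R}^n)$ precisely when $m\theta>n$. Hence $u\simeq|x|^{-(n-\beta\gamma)/(\gamma-1)}$ belongs to $L^{r_0}$ iff $r_0\cdot\tfrac{n-\beta\gamma}{\gamma-1}>n$, i.e. $q_0<\tfrac{n-\beta\gamma}{\gamma-1}$, which holds since $q_0+p_0\le\tfrac{n-\beta\gamma}{\gamma-1}$ with $p_0>0$; the same inequality, combined with a short manipulation of the formulas for $p_0$ and for the fast rate of $v$, gives $v\in L^{s_0}$ in each of the three cases, closing the cycle.
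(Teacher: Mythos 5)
This theorem is imported verbatim from \cite{Villavert:14e} (its heading reads ``Theorem~1 in \cite{Villavert:14e}'') and the present paper does not reprove it, so there is no in-paper proof to compare against line by line. The only in-paper evidence of the argument's shape is the remark immediately after the theorem---the (a)$\Leftrightarrow$(b) equivalence ``relies on key $L^p$ comparison estimates between the Riesz and Wolff potentials, the weighted HLS inequality, and a delicate bootstrap or lifting technique''---together with Propositions~\ref{fast prop 1}--\ref{fast prop 5} of \S\ref{section4}, which pin down the decay rates but \emph{under the extra hypothesis} that $(u,v)$ is already bounded and decaying. Your (a)$\Rightarrow$(b) sketch (truncation, contraction on $L^r$ at the critical exponent, a Wolff HLS-type estimate) is in the spirit of the quoted remark, and (c)$\Rightarrow$(a) is the elementary step it ought to be.

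The genuine gap is in (b)$\Rightarrow$(c), in the passage from integrability and boundedness to the precise decay rate for $u$. You split $W_{\beta,\gamma}$ at $t\simeq|x|$; the outer part indeed contributes $\simeq M_1^{1/(\gamma-1)}|x|^{-(n-\beta\gamma)/(\gamma-1)}$, but your claim that ``the inner piece is shown to be of the same order or smaller'' is not justified and in fact fails without extra input. For $t$ small relative to $|x|$, the ball $B_t(x)$ sits near $x$, so $\int_{B_t(x)}|y|^{\sigma_1}v^q\,dy$ is governed by the pointwise size of $v$ there; with $v$ merely bounded one gets only $\int_{B_t(x)}|y|^{\sigma_1}v^q\,dy\lesssim |x|^{\sigma_1}\|v\|_\infty^q\,t^n$, and the inner piece then comes out of order $|x|^{(\sigma_1+\beta\gamma)/(\gamma-1)}$, which is far larger than $|x|^{-(n-\beta\gamma)/(\gamma-1)}$ unless $\sigma_1\le -n$. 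To close this you must first extract a preliminary pointwise decay rate for $v$ near $x$ from the coupled integrability, and then iterate---feed the improved rate for $u$ back into the $v$-equation, and so on---until both components reach the fast rate; this is precisely the ``delicate bootstrap'' the paper alludes to, and it is why the paper's own Propositions~\ref{fast prop 1}--\ref{fast prop 4} simply \emph{assume} decay (``there is some rate $\theta_1$ for which $u(x)\simeq|x|^{-\theta_1}$'') rather than derive it. A smaller slip: in the critical case $q_0+p_0=\tfrac{n-\beta\gamma}{\gamma-1}$ one has $q_0<\tfrac{n-\beta\gamma}{\gamma-1}$ because $p_0>0$, so $r_0=n/q_0>\tfrac{n(\gamma-1)}{n-\beta\gamma}$ strictly; it does not ``coincide with the left endpoint'' of \eqref{optimal integrability} as you assert.
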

In view of this, we say a solution $(u,v)$ of system \eqref{Wolff} is an {\bf integrable solution} if $(u,v) \in  L^{r_0}(\mathbb{R}^n) \times L^{s_0}(\mathbb{R}^n)$ and say it is an {\bf optimal integrable solution} if $(u,v) \in L^{r}(\mathbb{R}^n) \times L^{s}(\mathbb{R}^n)$ for all $(r,s)$ satisfying \eqref{optimal integrability}.

There are some important observations that should be made here. The previous theorem asserts that solutions, under a fairly mild integrability assumption, are indeed bounded and fast decaying. The assumptions that $q \geq p$ and $\sigma_1 \leq \sigma_2$ are due to the in-homogeneity of the system (an issue which does not occur in the scalar case), but they are not so crucial. More precisely, the theorem still holds if $p \geq q$ and $\sigma_2 \leq \sigma_1$ provided that these parameters along with $u$ and $v$ are interchanged in the statement of the theorem. Condition \eqref{non-subcritical}, which is equivalent to perhaps the more familiar condition
\begin{equation}\label{non-subcritical 1}
\frac{n+\sigma_1}{q + \gamma - 1} + \frac{n+\sigma_2}{p + \gamma - 1} \leq \frac{n-\beta\gamma}{\gamma - 1}
\end{equation}
when $pq > (\gamma - 1)^2$, is certainly stronger than the condition $\max\lbrace q_0, \, p_0 \rbrace < (n-\beta\gamma)/(\gamma - 1)$; however, making this stronger assumption is not without proper motivation. For instance, in the special case of system \eqref{wHLS} with $\alpha \in (1,n)$, it turns out that the system admits neither a bounded and decaying positive classical solution nor a positive integrable solution in the subcritical case \cite{Villavert:14d},
$$\frac{n+\sigma_1}{q + 1} + \frac{n+\sigma_2}{p+1} > n-\alpha.$$ 
In fact, when $\sigma_1,\sigma_2 = 0$, system \eqref{wHLS} admits a positive integrable solution if and only if the critical case holds \cite{LL13,Lieb83}, i.e., 
 $$\frac{1}{q + 1} + \frac{1}{p+1} = \frac{n-\alpha}{n}.$$ 
 Of course, we conjecture that the same holds true for the more general Wolff type integral systems, but a proof of this escapes us at this time. However, we do have a closely related result below (see Theorem \ref{Liouville}). Even for system \eqref{wHLS} such a result on the non-existence of positive classical solutions is quite non-trivial and it is often called the (generalized) HLS conjecture \cite{Caristi2008} (in the case of system \eqref{wPDE} with $k=1$, it is more commonly called the H\'{e}non-Lane-Emden conjecture). It is crucial to note that no boundedness or growth assumptions are being imposed here and the known partial results are limited to dimension $n\leq 4$ or under the aforementioned assumptions \cite{Mitidieri96,Phan12,PQS07,SZ96,Souplet09}. 
 
In addition, the intervals of integrability in \eqref{optimal integrability} are optimal. Namely, if a solution of \eqref{Wolff} belongs to $L^{r}(\mathbb{R}^n) \times L^{s}(\mathbb{R}^n)$, then necessarily (see \cite{Villavert:14e})
\begin{align*} 
r > {} & \max\Big\lbrace \frac{n(\gamma - 1)}{n-\beta\gamma},\, \frac{n(\gamma-1)}{q(\frac{n-\beta\gamma}{\gamma - 1}) - (\beta\gamma +\sigma_1)} \Big\rbrace \,\text{ and }\, \\
s > {} & \max\Big\lbrace \frac{n(\gamma - 1)}{n-\beta\gamma},\, \frac{n(\gamma-1)}{p(\frac{n-\beta\gamma}{\gamma - 1}) - (\beta\gamma +\sigma_2)} \Big\rbrace.
\end{align*}
Therefore, $q \geq p$, $\sigma_1 \leq \sigma_2$ and \eqref{non-subcritical} imply that
$$r > \max\Big\lbrace \frac{n(\gamma - 1)}{n-\beta\gamma},\, \frac{n(\gamma-1)}{q(\frac{n-\beta\gamma}{\gamma - 1}) - (\beta\gamma +\sigma_1)} \Big\rbrace = \frac{n(\gamma - 1)}{n-\beta\gamma}.$$
We also mention some previous articles concerning the analysis of the HLS and Wolff type integral systems, especially since they have ultimately inspired the work in this paper. The study on the existence, non-existence, classification and decay properties of HLS type and related systems can be found in \cite{Caristi2008,ChengLi15,CLO05,CLO06,DAmbrosioMitidieri14,DouZhou15,LL13a,LLM12,Villavert:14b}. Similar studies on the Wolff type integral systems can be found in \cite{ChenLu2014,CL11,Lei2011,LeiLi2012,PhucVerbitsky2008,SunLei2012}.

We are now ready to state the main results of this paper.

\subsection{Optimal Liouville theorem for positive solutions}

Our first main result for system \eqref{Wolff} is the following non-existence result.
\begin{theorem}\label{Liouville}
Let $\beta > 0$ and $\gamma > 1$ with $\beta\gamma < n$. Then the integral system \eqref{Wolff} has no positive solution for any double bounded coefficients $c_{1}(x)$ and $c_{2}(x)$ if either $$pq \leq (\gamma -1)^2 \,\text{ or }\, pq > (\gamma - 1)^2 \,\text{ and }\, \max \Big\lbrace q_0, p_0 \Big\rbrace > \frac{n-\beta\gamma}{\gamma-1}.$$
If, in particular, $\gamma \in (1,2]$, then the same conclusion holds in the endpoint case:
$$pq > (\gamma - 1)^2 \,\text{ and }\, \max \Big\lbrace q_0, p_0 \Big\rbrace = \frac{n-\beta\gamma}{\gamma-1}.$$
\end{theorem}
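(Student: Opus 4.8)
The plan is to argue by contradiction. Suppose \eqref{Wolff} admits a positive solution $(u,v)$ while $pq>(\gamma-1)^2$ and $\max\{q_0,p_0\}=\kappa$, where $\kappa:=\frac{n-\beta\gamma}{\gamma-1}>0$. Since \eqref{Wolff} is symmetric under interchanging $(u,p,\sigma_1,c_1)\leftrightarrow(v,q,\sigma_2,c_2)$, which swaps $q_0$ and $p_0$, we may assume $q_0=\kappa\ge p_0$. The backbone of the argument is the pair of identities $pq_0=(\gamma-1)p_0+\beta\gamma+\sigma_2$ and $qp_0=(\gamma-1)q_0+\beta\gamma+\sigma_1$, both immediate from the definitions of $q_0,p_0$; together with $(\gamma-1)q_0=n-\beta\gamma$ they yield $qp_0=n+\sigma_1$ and $n+\sigma_2-pq_0=(\gamma-1)(\kappa-p_0)\ge0$. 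The first identity says that the $u$-equation is \emph{exactly logarithmically critical}: once $v(y)\ge c|y|^{-p_0}$ is known, the weight $|y|^{\sigma_1}v^q$ dominates a multiple of $|y|^{-n}$, whose ball-averages grow only logarithmically. The second, $n+\sigma_2-pq_0\ge0$, is what makes the slow-rate lower bound $v(x)\ge c|x|^{-p_0}$ reachable from the $v$-equation.

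First I would record the crude bound $u(x),v(x)\ge c|x|^{-\kappa}$ for large $|x|$. The numbers $\int_{B_1(0)}|y|^{\sigma_1}v^q\,dy$ and $\int_{B_1(0)}|y|^{\sigma_2}u^p\,dy$ are positive, and must be finite, for otherwise the corresponding Wolff potential is identically $+\infty$, contradicting $u,v\in L^1_{loc}$; then, restricting the Wolff integral to $t\ge|x|+1$, where $B_t(x)\supset B_1(0)$, gives the bound since $\beta\gamma<n$ (the double boundedness of the $c_i$ being used here). Next, inserting $u(y)\ge c|y|^{-\kappa}$ into the $v$-equation and estimating $\int_{B_t(x)}|y|^{\sigma_2}u^p\,dy$ from below by its part over the shell $\{R_0<|y|<t/2\}\subset B_t(x)$ for $t\ge2|x|$, together with the identity $\frac{\beta\gamma+\sigma_2-p\kappa}{\gamma-1}=-p_0$, gives $v(x)\ge c|x|^{-p_0}$; feeding this back into the $u$-equation, now with $qp_0=n+\sigma_1$ and $\int_{R_0<|y|<t/2}|y|^{-n}\,dy\ge c\ln t$, produces the first logarithmic gain $u(x)\ge c(\ln|x|)^{1/(\gamma-1)}|x|^{-\kappa}$.

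Then I would iterate. If $u(x)\ge c(\ln|x|)^{A}|x|^{-\kappa}$, then since $|y|^{\sigma_2-p\kappa}(\ln|y|)^{pA}$ integrated over the shell picks up at least a factor $(\ln(t/2))^{pA}$, one gets $v(x)\ge c'(\ln|x|)^{B}|x|^{-p_0}$ with $B\ge\frac{pA}{\gamma-1}$; and $v(x)\ge c'(\ln|x|)^{B}|x|^{-p_0}$ together with $qp_0=n+\sigma_1$ and $\int_{R_0<|y|<t/2}|y|^{-n}(\ln|y|)^{qB}\,dy\ge c(\ln(t/2))^{qB+1}$ gives $u(x)\ge c''(\ln|x|)^{A'}|x|^{-\kappa}$ with $A'=\frac{qB+1}{\gamma-1}\ge\frac{pq}{(\gamma-1)^2}A+\frac1{\gamma-1}$. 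Since $\frac{pq}{(\gamma-1)^2}>1$, the attainable logarithmic exponents grow without bound. A modest but necessary point is to keep the threshold $R_0$ fixed through all the iterations: on the bounded shell $R_0\le|x|\le R_0^2$ one only needs a crude positive lower bound, while for $|x|\ge R_0^2$ the estimates above apply verbatim, so each inequality holds on the one fixed region $\{|x|\ge R_0\}$, at the cost of the multiplicative constant degrading from step to step.

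Finally, for every $B>0$ we have $v(x)\ge c_B(\ln|x|)^{B}|x|^{-p_0}$ on $\{|x|\ge R_0\}$. Substituting this into the $u$-equation at a single fixed point $z$ with $|z|>R_0$, and writing $m:=\frac{qB+1}{\gamma-1}$, one gets $\int_{B_t(z)}|y|^{\sigma_1}v^q\,dy\ge c\,c_B^{q}(\ln(t/2))^{qB+1}$ for $t$ large, hence $u(z)\ge c'\,c_B^{q/(\gamma-1)}\int_{2|z|}^{\infty}(\ln(t/2))^{m}\,t^{-\kappa-1}\,dt\ge c''\,c_B^{q/(\gamma-1)}\,(m/\kappa)^{m}e^{-m}$, the last inequality by retaining only $t\approx e^{m/\kappa}$, where $(\ln(t/2))^{m}t^{-\kappa}$ is largest. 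Now $(m/\kappa)^{m}e^{-m}=e^{m(\ln m-1-\ln\kappa)}$ grows super-exponentially in $m$, whereas $c_B$ decays at most exponentially in $B$ (the iteration multiplies the constant at each step by a power of itself and by a factor only polynomially small in the current logarithmic exponent, and that exponent grows geometrically). Hence the right-hand side tends to $+\infty$ as $B\to\infty$, forcing $u(z)=+\infty$ and contradicting $u\in L^1_{loc}$. The step I expect to be the main obstacle is precisely this last balance — following the multiplicative constants accurately enough through infinitely many bootstrap steps to be sure their decay is overwhelmed by the factorial-type gain $(m/\kappa)^{m}e^{-m}$ — and this is what forces one to freeze $R_0$ so that the final localization is legitimate. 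The Wolff-potential estimates and the algebra are otherwise routine; the standing assumption $\gamma\le2$, available throughout, is used (as in \cite{Villavert:14e}) only for the local regularity of solutions that makes the bootstrap rigorous.
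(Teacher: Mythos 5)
Your proposal addresses only the endpoint case $pq > (\gamma-1)^2$, $\max\{q_0,p_0\} = \kappa := (n-\beta\gamma)/(\gamma-1)$; the statement also asserts non-existence when $pq \le (\gamma-1)^2$ or when $\max\{q_0,p_0\} > \kappa$, which the paper handles by the power-rate iteration you also set up (showing the lower-bound exponents $a_j,b_j$ eventually turn negative), and a complete proof must include those cases.

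For the endpoint case you take a genuinely different route from the paper. The paper applies H\"older's inequality with exponents $\tfrac1{\gamma-1}$ and $\tfrac1{2-\gamma}$ --- this is exactly where $\gamma\in(1,2]$ is used --- to convert the Wolff potential from below into a local mass bound $u(x)\gtrsim R^{-\kappa}\bigl(\int_{B_{R/4}(0)}|y|^{\sigma_1}v^q\,dy\bigr)^{1/(\gamma-1)}$ on $|x|\le R/4$, and symmetrically for $v$; composing these and using that criticality makes the power of $R$ cancel gives $\int_{B_{R/4}(0)}|x|^{\sigma_2}u^p\,dx\ge C\bigl(\int_{B_{R/4}(0)}|x|^{\sigma_2}u^p\,dx\bigr)^{pq/(\gamma-1)^2}$ with $C$ independent of $R$, whence $|x|^{\sigma_2}u^p\in L^1(\mathbb{R}^n)$ and then, localizing the $x$-integration to an annulus, $\int|x|^{\sigma_2}u^p\,dx=0$. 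Your approach instead bootstraps logarithmic gains: the algebra $qp_0=n+\sigma_1$ and $n+\sigma_2-pq_0=(\gamma-1)(\kappa-p_0)\ge0$ is correct, the log-exponent recursion has ratio $\rho=pq/(\gamma-1)^2>1$, and the final balance --- constants decaying like $C^{\rho^k}$ versus a factorial gain of order $e^{m_k\ln m_k}$ with $m_k\sim\rho^k$ --- can be made to close. The paper's single H\"older step buys a much shorter proof with no constant bookkeeping, at the price of a non-obvious rearrangement of the Wolff integral; yours is more elementary in ingredients but far more delicate in execution, and you are right that the bookkeeping is the danger point.

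Two items to reconcile before calling it a proof. First, you attribute the hypothesis $\gamma\le2$ to local regularity, but in the paper it is used precisely for the H\"older step; your argument as written never invokes $\gamma\le2$ at all, so either you are proving a strictly stronger statement (in which case say so and verify carefully) or there is a step that silently fails for $\gamma>2$ --- this must be resolved. Second, freezing $R_0$ is right in spirit, but on the bounded shell $R_0\le|x|\le R_0^2$ the target bound $c_k(\ln|x|)^{A_k}|x|^{-\kappa}$ carries a coefficient $(\ln|x|)^{A_k}$ that grows with $A_k$ unless $\ln R_0\le1$; normalizing $R_0=e$ so that $\ln|x|\in[1,2]$ there, and checking that $c_k$ eventually decays faster than $2^{-A_k}$, repairs it, but it is a genuine step that needs to be written out rather than waved through.
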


\begin{remark}
As a consequence of Theorem \ref{Liouville}, when given some positive solution $(u,v)$ to either system \eqref{Wolff} or the quasilinear systems considered below, we shall therefore assume that 
\begin{equation}\label{necessary existence}
pq > (\gamma-1)^2 \,\text{ and }\, \max \Big\lbrace q_0, p_0 \Big\rbrace < \frac{n-\beta\gamma}{\gamma-1}.
\end{equation}
The next theorem indicates that, under the assumptions in \eqref{convention}, Theorem \ref{Liouville} is indeed sharp.
\end{remark}

\begin{theorem}\label{theorem2}
Suppose that $p,q,\sigma_{1},\sigma_{2},\beta$ and $\gamma$ satisfy condition \eqref{necessary existence}. Then there exist double bounded coefficients $c_{1}(x)$ and $c_{2}(x)$ such that the integral system \eqref{Wolff} admits a positive solution.
\end{theorem}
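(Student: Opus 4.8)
The plan is to prescribe the solution explicitly and then recover the coefficients from it. Writing $\langle x\rangle := (1+|x|^2)^{1/2}$, I would take the ansatz
$$u(x) = \langle x\rangle^{-q_0}, \qquad v(x) = \langle x\rangle^{-p_0},$$
which is positive, smooth, bounded and decaying in the sense of the paper; here $q_0,p_0>0$ because, under \eqref{necessary existence}, the denominator $pq-(\gamma-1)^2$ is positive while $\sigma_1,\sigma_2>-\beta\gamma$ forces the numerators of $q_0$ and $p_0$ to be positive as well. Since $\sigma_1,\sigma_2>-\beta\gamma>-n$, the weighted powers $|y|^{\sigma_i}u^p$ and $|y|^{\sigma_i}v^q$ lie in $L^1_{loc}(\mathbb{R}^n)$, so the Wolff potentials below are well defined. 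The goal is to prove the uniform two-sided bounds
$$W_{\beta,\gamma}(|y|^{\sigma_1}v^q)(x)\simeq\langle x\rangle^{-q_0}=u(x) \qquad\text{and}\qquad W_{\beta,\gamma}(|y|^{\sigma_2}u^p)(x)\simeq\langle x\rangle^{-p_0}=v(x).$$
Granting these, I would simply set
$$c_1(x):=\frac{u(x)}{W_{\beta,\gamma}(|y|^{\sigma_1}v^q)(x)}, \qquad c_2(x):=\frac{v(x)}{W_{\beta,\gamma}(|y|^{\sigma_2}u^p)(x)};$$
these are continuous, positive and double bounded precisely by the comparisons above, and $(u,v)$ solves \eqref{Wolff} with these coefficients by construction. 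So the whole theorem reduces to the two Wolff potential estimates, and by the symmetry of the system it suffices to treat the first.

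To estimate $W_{\beta,\gamma}(|y|^{\sigma_1}v^q)(x)=\int_0^\infty\big(I(x,t)/t^{n-\beta\gamma}\big)^{1/(\gamma-1)}\,\frac{dt}{t}$, the first step is to control the mass
$$I(x,t):=\int_{B_t(x)}|y|^{\sigma_1}\langle y\rangle^{-p_0q}\,dy$$
by splitting the radius at the scale $\langle x\rangle$. For $t\lesssim\langle x\rangle$ one has $|y|\simeq\langle x\rangle$ on the bulk of $B_t(x)$ when $|x|\gtrsim 1$, giving $I(x,t)\simeq\langle x\rangle^{\sigma_1-p_0q}t^n$, while for $|x|\lesssim 1$ one uses $\sigma_1>-n$ to get $I(x,t)\simeq t^{n+\sigma_1}$ for $t\lesssim 1$. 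For $t\gtrsim\langle x\rangle$ one has $I(x,t)\simeq\int_{|y|<t}|y|^{\sigma_1}\langle y\rangle^{-p_0q}\,dy$, which is comparable to $t^{n+\sigma_1-p_0q}$ if $n+\sigma_1-p_0q>0$, to the finite total mass if $n+\sigma_1-p_0q<0$, and carries a logarithm in the borderline case. Feeding these into the Wolff integral, the dominant contribution comes from $t\simeq\langle x\rangle$ and equals, up to constants, $\langle x\rangle^{(\beta\gamma+\sigma_1-p_0q)/(\gamma-1)}$; the crucial algebraic point is the identity
$$\frac{\beta\gamma+\sigma_1-p_0q}{\gamma-1}=-q_0 \qquad\Big(\text{and, symmetrically,}\ \frac{\beta\gamma+\sigma_2-q_0p}{\gamma-1}=-p_0\Big),$$
which is a direct consequence of the formulas defining $q_0$ and $p_0$. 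The remaining ranges are then absorbed by the hypotheses: $\beta\gamma+\sigma_1>0$ makes the $t\to 0$ part finite; $q_0>0$ makes the $t\to\infty$ part finite when the mass grows like a power; and $q_0<\frac{n-\beta\gamma}{\gamma-1}$ ensures that, when the mass saturates at a constant, the resulting tail $\langle x\rangle^{-(n-\beta\gamma)/(\gamma-1)}$ is dominated by the main term $\langle x\rangle^{-q_0}$. The matching lower bound is immediate on restricting the $t$-integral to $\langle x\rangle<t<2\langle x\rangle$. This gives the first comparison, and the symmetric computation handles $W_{\beta,\gamma}(|y|^{\sigma_2}u^p)$.

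The hard part will be exactly this asymptotic bookkeeping: verifying, uniformly in $x$, that none of the three ranges (small $t$, the matching scale $t\simeq\langle x\rangle$, large $t$) ever dominates the scale $t\simeq\langle x\rangle$, including the exceptional situations where $n+\sigma_i-p_0q$ or $n+\sigma_i-q_0p$ vanishes and a logarithmic factor enters $I(x,t)$ — here the \emph{strict} inequality $\max\{q_0,p_0\}<\frac{n-\beta\gamma}{\gamma-1}$ from \eqref{necessary existence} provides exactly the slack needed to swallow such factors. Everything else — positivity, local integrability, and the double boundedness of the constructed $c_i$ — is routine once the two estimates are in place. If one prefers, an alternative is to invoke the analogous pointwise Wolff-potential computations already available for the scalar equation \eqref{weighted PDE} and for system \eqref{wHLS} and to adapt them to the present weights and exponents.
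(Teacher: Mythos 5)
Your proposal matches the paper's own proof: the paper also prescribes $u(x)=(1+|x|^2)^{-q_0/2}$, $v(x)=(1+|x|^2)^{-p_0/2}$, splits the Wolff integral at the scale $|x|$, uses the identity $\frac{qp_0-\sigma_1-\beta\gamma}{\gamma-1}=q_0$ (and its twin) to match exponents, and then defines $c_i$ as the ratio of the prescribed function to the computed potential. One small remark: under \eqref{necessary existence} the bound $q_0<\frac{n-\beta\gamma}{\gamma-1}$ is equivalent to $qp_0-\sigma_1<n$ (and $p_0<\frac{n-\beta\gamma}{\gamma-1}$ to $pq_0-\sigma_2<n$), so the ``mass saturates at a constant'' and logarithmic borderline cases you flag never actually occur; only the power-growth regime arises, which simplifies the bookkeeping and is exactly what the paper exploits.
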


\subsection{Decay rates of positive solutions}

The following results concern the decay properties of bounded and decaying solutions starting with a result on the slow decaying solutions.

\begin{theorem}\label{slow theorem}
Let $\beta > 0$ and $\gamma > 1$ with $\beta\gamma < n$ and suppose $(u,v)$ is a bounded positive solution of \eqref{Wolff}. Then the following statements hold.
\begin{enumerate}[(i)]
\item If $\theta_{1} < q_0$ and $\theta_{2} < p_0$, then there does not exist any positive constant $c$ for which either
$$ u(x) \geq c(1 + |x|)^{-\theta_1} \,\text{ or }\, v(x) \geq c(1+|x|)^{-\theta_2} \text{ for a.e. } x \in \mathbb{R}^n.$$

\item If $\theta_3 > q_0$, $\theta_4 > p_0$ and $(u,v)$ is not integrable, i.e., either $u \not\in L^{r_0}(\mathbb{R}^n)$ or $v \not\in L^{s_0}(\mathbb{R}^n)$, then there does not exist any positive constant $C$ for which either
$$ u(x) \leq C(1 + |x|)^{-\theta_3} \,\text{ or }\, v(x) \leq C(1+|x|)^{-\theta_4} \text{ for a.e. } x \in \mathbb{R}^n.$$
\item If $(u,v)$ is a decaying solution but is not integrable, then it necessarily decays with the slow rates as $|x| \rightarrow \infty$, i.e.,
$$ u(x) \simeq |x|^{-q_0} \,\text{ and }\, v(x) \simeq |x|^{-p_0}.$$
\end{enumerate}
\end{theorem}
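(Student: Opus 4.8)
The plan is to establish parts (i) and (ii) first by self-improving estimates on the Wolff potential, and then to deduce part (iii) as a short combination of the two. For part (i), suppose for contradiction that $u(x) \geq c(1+|x|)^{-\theta_1}$ with $\theta_1 < q_0$. Feeding this lower bound into the second equation of \eqref{Wolff} and using that $c_2$ is double bounded, I would estimate $W_{\beta,\gamma}(|y|^{\sigma_2}u^p)(x)$ from below by restricting the $t$-integral to the range $t \simeq |x|$ and the ball integral to an annulus where $|y| \simeq |x|$; this yields a new lower bound $v(x) \gtrsim (1+|x|)^{-\Theta_2}$ with $\Theta_2 = \frac{(\gamma-1)(\theta_1 p - \sigma_2 - \beta\gamma)}{\gamma - 1}$ — more precisely $\Theta_2$ determined by the scaling relation $(\gamma-1)\Theta_2 = p\theta_1 - \sigma_2 - \beta\gamma$ — provided the exponent on $|y|^{\sigma_2 - p\theta_1}$ keeps the ball-integral convergent, i.e.\ as long as $p\theta_1 - \sigma_2 < n$. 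Substituting this back into the first equation gives an improved lower bound on $u$, and iterating produces a sequence of exponents $\theta_1^{(k)}$ governed by a linear recursion whose fixed point is exactly $q_0$ (this is where the definition of $q_0,p_0$ as the solution of the $2\times 2$ linear system $(\gamma-1)q_0 = \beta\gamma + \sigma_1 + qp_0 - \ldots$ comes from). The monotone convergence $\theta_1^{(k)} \uparrow q_0$ forces $u(x) \gtrsim |x|^{-\theta}$ for every $\theta < q_0$, and then one more application of the Wolff potential integral over all of $\mathbb{R}^n$ diverges because the exponent crosses the critical integrability threshold $\max\{q_0,p_0\} < (n-\beta\gamma)/(\gamma-1)$ from \eqref{necessary existence} — contradicting boundedness of $v$. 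The case starting from the $v$ lower bound is symmetric.

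For part (ii), I would run the dual argument: assume $u(x) \leq C(1+|x|)^{-\theta_3}$ with $\theta_3 > q_0$ (the $v$ case again symmetric). Here the strategy is to split the Wolff potential defining $v$ into the near region $\{|y| \lesssim |x|\}$ and the far region, bound the far part using only local integrability of $u$, and bound the near part using the decay hypothesis; this gives $v(x) \lesssim (1+|x|)^{-\Theta_4}$ with $\Theta_4$ driven by the same scaling recursion but now \emph{decreasing} toward $p_0$ from above. Iterating yields $v(x) \lesssim |x|^{-\theta}$ for all $\theta$ slightly above $p_0$, hence $u(x) \lesssim |x|^{-\theta'}$ for all $\theta'$ slightly above $q_0$; taking $\theta'$ close enough to $q_0$ that $r_0 = n/q_0 < n/\theta'$ still exceeds the Lebesgue exponent forces $u \in L^{r_0}$ and likewise $v \in L^{s_0}$, i.e.\ $(u,v)$ is integrable, contradicting the hypothesis. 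One must check that the iteration does not overshoot below $p_0$ (respectively $q_0$), which follows from the contraction property of the recursion matrix, whose spectral radius is less than $1$ precisely because $pq > (\gamma-1)^2$ — this is where that part of \eqref{necessary existence} is used.

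Part (iii) is then immediate: a decaying solution satisfies $u(x) \simeq |x|^{-\theta_1}$ and $v(x) \simeq |x|^{-\theta_2}$ for some positive $\theta_1,\theta_2$. The lower bound $u(x) \gtrsim |x|^{-\theta_1}$ together with part (i) forces $\theta_1 \geq q_0$ and $\theta_2 \geq p_0$ (otherwise the lower bound is contradicted). The upper bound $u(x) \lesssim |x|^{-\theta_1}$ together with part (ii), applied with $\theta_3 = \theta_1$, $\theta_4 = \theta_2$, and using that $(u,v)$ is not integrable, forces $\theta_1 \leq q_0$ and $\theta_2 \leq p_0$. Hence $\theta_1 = q_0$ and $\theta_2 = p_0$, which is the claimed slow decay.

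The main obstacle I anticipate is making the iteration in parts (i) and (ii) fully rigorous and uniform: one must track the implied constants through infinitely many steps (showing they do not blow up), handle the borderline cases where an intermediate exponent makes a ball-integral logarithmically divergent (the $p\theta - \sigma = n$ thresholds, as in Theorem \ref{fast theorem}), and verify that the bootstrap genuinely stops at $q_0, p_0$ rather than jumping past them — all of which hinge on a careful analysis of the linear recursion and the sign conditions packaged in \eqref{necessary existence}.
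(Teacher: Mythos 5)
Your overall strategy for part (i) — iterate the Wolff-potential lower bound through the system and track the exponents — matches the paper's Proposition~\ref{slow prop 1}, and your part (iii) is the same short combination of (i) and (ii). But you have the \emph{dynamics of the recursion exactly backwards}, and this is not a cosmetic slip: it breaks both (i) and (ii) as you have written them. The recursion for the exponents has multiplier $r_0 = pq/(\gamma-1)^2$, and condition \eqref{necessary existence} gives $pq>(\gamma-1)^2$, i.e.\ $r_0>1$. So $(q_0,p_0)$ is a \emph{repelling} fixed point, not an attracting one, and the "spectral radius less than $1$" claim in your part (ii) is false. In part (i) the paper computes $b_j = r_0^j(\theta_1 - q_0) + q_0$; since $\theta_1<q_0$ and $r_0>1$, the exponents do \emph{not} creep up to $q_0$ — they blow down to $-\infty$. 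It is precisely this divergence that yields the contradiction: once $b_{j_0}<0$ the lower bound $u(x)\gtrsim|x|^{-b_{j_0}}$ says $u$ grows at infinity, and the next Wolff integral is manifestly infinite, contradicting boundedness of $v$. Your version (exponents $\uparrow q_0$, then "one more application diverges") does not close, because for $\theta<q_0$ close to $q_0$ one has $p\theta > \beta\gamma+\sigma_2$, so that last Wolff integral actually converges.

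For part (ii), the paper's Proposition~\ref{slow prop 2} does \emph{not} iterate at all. It is a one-line observation: if $u(x)\le C(1+|x|)^{-\theta_3}$ with $\theta_3>q_0$, then since $r_0=n/q_0$ one has $r_0\theta_3>n$, hence $u\in L^{r_0}(\mathbb{R}^n)$ directly by computing $\int_{|x|>R}|x|^{-r_0\theta_3}\,dx<\infty$; similarly for $v$. Your proposed bootstrap toward $p_0$ "from above" again relies on the recursion contracting, which it does not, and it also has to navigate the logarithmic borderline $p\theta-\sigma_2=n$ — a complication the paper entirely avoids by not iterating. So the correct fixes are: in (i), keep the iteration but use divergence of the exponents to $-\infty$ (not convergence) as the contradiction mechanism; in (ii), discard the iteration and argue integrability directly from the pointwise decay.
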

Parts (i) and (ii) of the theorem, in a sense, imply that bounded positive solutions that are not integrable have almost the slow rates. Then part (iii) demonstrates that if it is also decaying, then it actually has the slow rates. So this result naturally complements Theorem \ref{fast theorem}, however, Theorem \ref{slow theorem}  is far more general than Theorem \ref{fast theorem} in terms of the assumptions placed on system \eqref{Wolff}, namely, on the parameters $\gamma,p,q,\sigma_1$ and $\sigma_2$. This leads us to ask if we may establish another version of Theorem \ref{fast theorem} that relaxes the assumptions on the parameters. In view of this, we do have such a result, however, we must restrict our attention to the optimal integrable solutions.
\begin{theorem}\label{improved fast theorem}
Let $\beta > 0$ and $\gamma > 1$ with $\beta\gamma < n$, $q\geq p$, $\sigma_1 \leq \sigma_2$ and suppose $(u,v)$ is a bounded and decaying positive solution of system \eqref{Wolff} satisfying \eqref{non-subcritical}.
The following statements are equivalent.
\begin{enumerate}[(i)]
\item $(u,v)$ is an optimal integrable solution. 

\item $(u,v)$ decays with the fast rates as $|x|\rightarrow \infty$.
\end{enumerate}
\end{theorem}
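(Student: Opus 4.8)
The plan is to prove the two implications $(ii)\Rightarrow(i)$ and $(i)\Rightarrow(ii)$ separately, using the asymptotic integral estimates that underlie Theorem \ref{fast theorem} but \emph{without} invoking the structural hypotheses ($q_0+p_0$ non-critical location, $p>1$, etc.) beyond what Theorem \ref{improved fast theorem} assumes. For $(ii)\Rightarrow(i)$, suppose $(u,v)$ decays with the fast rates, i.e.\ $u(x)\simeq |x|^{-\frac{n-\beta\gamma}{\gamma-1}}$ and $v(x)$ has one of the three stated fast behaviors (which follow from $(u,v)$ being fast decaying together with the equations — indeed, once $u\simeq|x|^{-\frac{n-\beta\gamma}{\gamma-1}}$, plugging into the second Wolff equation and evaluating $W_{\beta,\gamma}(|y|^{\sigma_2}u^p)$ pins down $v$'s rate by a direct scaling computation, splitting on the sign of $p(\frac{n-\beta\gamma}{\gamma-1})-\sigma_2-n$). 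Then one simply computes $\int_{\mathbb{R}^n} u^r\,dx$ and $\int_{\mathbb{R}^n} v^s\,dx$: the integral $\int |x|^{-\frac{r(n-\beta\gamma)}{\gamma-1}}dx$ converges near infinity iff $r>\frac{n(\gamma-1)}{n-\beta\gamma}$ (and converges near the origin automatically since $u$ is bounded), and similarly for $v$ the borderline is $s>\max\{\frac{n(\gamma-1)}{n-\beta\gamma},\frac{n(\gamma-1)}{p(\frac{n-\beta\gamma}{\gamma-1})-(\beta\gamma+\sigma_2)}\}$, with the logarithmic case handled by noting $(\ln|x|)^{\frac{s}{\gamma-1}}$ is absorbed by any strict power gain. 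This gives exactly \eqref{optimal integrability}, so $(u,v)$ is optimal integrable.

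For the converse $(i)\Rightarrow(ii)$, assume $(u,v)\in L^r\times L^s$ for every $(r,s)$ satisfying \eqref{optimal integrability}. The strategy is a bootstrap on the decay rates $\theta_1,\theta_2$ (which exist since $(u,v)$ is assumed decaying). First, Theorem \ref{slow theorem}(iii) is the key separation device: a decaying solution that is \emph{not} integrable must decay with the slow rates; so it suffices to rule out the slow rates under the optimal-integrability hypothesis, or equivalently to show the solution \emph{is} integrable and then upgrade to the fast rates. To show integrability, I would argue by contradiction: if $(u,v)$ is not integrable, then by Theorem \ref{slow theorem}(iii) it decays with the slow rates $q_0,p_0$, hence $u\simeq|x|^{-q_0}$, and then $u\in L^r$ near infinity forces $r>n/q_0 = r_0$. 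But optimal integrability requires $u\in L^r$ for \emph{all} $r>\frac{n(\gamma-1)}{n-\beta\gamma}$; under \eqref{non-subcritical} one has $q_0 \le \frac{n-\beta\gamma}{\gamma-1}$ wait — more carefully, \eqref{non-subcritical} gives $q_0+p_0\le\frac{n-\beta\gamma}{\gamma-1}$ so in particular $q_0<\frac{n-\beta\gamma}{\gamma-1}$, which means $r_0=n/q_0 > \frac{n(\gamma-1)}{n-\beta\gamma}$, so there exist admissible exponents $r\in(\frac{n(\gamma-1)}{n-\beta\gamma}, r_0]$ for which optimal integrability demands $u\in L^r$ but slow decay forbids it — contradiction. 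Hence $(u,v)$ is integrable, and then by the equivalence $(a)\Leftrightarrow(c)$ of Theorem \ref{fast theorem} it decays with the fast rates. The one subtlety is that Theorem \ref{fast theorem} carries the extra hypotheses $q\ge p>1$, $\sigma_1\le\sigma_2\le 0$; here we only have $q\ge p$ and $\sigma_1\le\sigma_2$. So in place of citing Theorem \ref{fast theorem} directly I would instead redo the final bootstrap: knowing $(u,v)$ integrable and decaying, feed $u\simeq|x|^{-\theta_1}$ with $\theta_1>\frac{n(\gamma-1)}{n-\beta\gamma}\cdot\frac{1}{?}$ wait — knowing $u\in L^{r_0}$ and $v\in L^{s_0}$, insert these into the Wolff equations and use the standard asymptotic lemma for $W_{\beta,\gamma}$ of a compactly-decaying density: since $|y|^{\sigma_1}v^q\in L^1$ in a suitable sense, $W_{\beta,\gamma}(|y|^{\sigma_1}v^q)(x)\simeq |x|^{-\frac{n-\beta\gamma}{\gamma-1}}$ as $|x|\to\infty$, forcing $u$ to have the fast rate; then the three-case behavior of $v$ follows by evaluating $W_{\beta,\gamma}(|y|^{\sigma_2}u^p)$ with $u$ now known to be fast.

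The main obstacle I anticipate is precisely this last point: verifying that the asymptotic estimate $W_{\beta,\gamma}(f)(x)\simeq |x|^{-\frac{n-\beta\gamma}{\gamma-1}}$ holds, and the integrability of the right-hand-side densities $|y|^{\sigma_i}(\cdot)^{p\text{ or }q}$ near the origin and near infinity, using only $\sigma_1\le\sigma_2$ rather than $\sigma_i\le 0$. Near the origin, $\sigma_i>-\beta\gamma$ from \eqref{convention} keeps the Hardy weight locally integrable against bounded $u,v$; near infinity, one needs the decay rate $\theta_i$ times $p$ or $q$, shifted by $\sigma_i$, to beat the relevant threshold, which is exactly where \eqref{non-subcritical} and the admissibility of all $(r,s)$ in \eqref{optimal integrability} must be combined. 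Making the circular-looking bootstrap rigorous — one starts only knowing \emph{some} decay rates $\theta_1,\theta_2$, improves them in finitely many steps, and must check at each step that the integrals defining the Wolff potentials remain finite — is the delicate bookkeeping part, but it is modeled closely on the argument already carried out in \cite{Villavert:14e} for Theorem \ref{fast theorem}, and the relaxation from $\sigma_i\le 0$ to $\sigma_1\le\sigma_2$ only affects which of the three cases for $v$ occurs, not the mechanism.
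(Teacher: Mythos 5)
Your $(ii)\Rightarrow(i)$ direction is essentially the paper's argument (Proposition~\ref{fast prop 5}): plug the known fast asymptotics of $u$ and $v$ into the $L^r$ and $L^s$ integrals, split at a large radius, and check convergence case by case, with the logarithmic factor absorbed by a small $\epsilon$-loss in the exponent. That part is correct.

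The gap is in $(i)\Rightarrow(ii)$. Your contradiction argument via Theorem~\ref{slow theorem}(iii) is valid but only delivers that the solution is \emph{integrable}, i.e.\ $\theta_1 > q_0$ and $\theta_2 > p_0$; it does \emph{not} pin down the rates. You then propose to bootstrap by appealing to an ``asymptotic lemma'' asserting $W_{\beta,\gamma}(|y|^{\sigma_1}v^q)(x)\simeq |x|^{-\frac{n-\beta\gamma}{\gamma-1}}$ because the density is ``$L^1$ in a suitable sense.'' This is not automatic: the Wolff potential of a nonnegative density decays with the fast rate $|x|^{-\frac{n-\beta\gamma}{\gamma-1}}$ only when the density has finite mass and its tail decays strictly faster than $|y|^{-n}$, i.e.\ one needs $q\theta_2 - \sigma_1 > n$, which is not a consequence of $\theta_2 > p_0$ alone. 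So the bootstrap as sketched does not close, and you yourself flag exactly this point as the ``delicate bookkeeping part.''

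The paper avoids the detour entirely. It uses the optimal integrability hypothesis at full strength, rather than just to exclude the slow rate: if $u(x)\simeq |x|^{-\theta_1}$ and $u\in L^r$ for \emph{every} $r>\frac{n(\gamma-1)}{n-\beta\gamma}$, then testing with $r$ arbitrarily close to the threshold forces $\theta_1\ge \frac{n-\beta\gamma}{\gamma-1}$; the elementary lower bound $u(x)\ge c|x|^{-\frac{n-\beta\gamma}{\gamma-1}}$ (obtained directly from the first Wolff equation by restricting to a fixed ball, Proposition~\ref{fast lower bound}) gives the reverse inequality, hence $\theta_1 = \frac{n-\beta\gamma}{\gamma-1}$ (Proposition~\ref{fast prop 1}). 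Only then, with $u$'s fast rate in hand, does the paper compute $W_{\beta,\gamma}(|y|^{\sigma_2}u^p)$ and split into the three cases for $v$ (Propositions~\ref{fast prop 2}--\ref{fast prop 4}), using a scaling change of variable and the limit identity \eqref{lambda limit} for the borderline logarithmic case. You should replace the Theorem~\ref{slow theorem}(iii)-plus-bootstrap step with this direct squeeze argument, which requires neither the integrability detour nor the unproven asymptotic lemma.
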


\begin{remark}
Under the assumptions of Theorem \ref{fast theorem}, the optimal integrable solutions are equivalent to the integrable solutions. The proof of this relies on key $L^p$ comparison estimates between the Riesz and Wolff potentials, the weighted HLS inequality, and a delicate bootstrap or lifting technique. However, we do not know if this equivalence remains true under the more general conditions of Theorem \ref{improved fast theorem}, even under the additional assumption that the solution is bounded and decaying.
\end{remark}

\subsection{Quasilinear systems}
In establishing the asymptotic and Liouville type results for our family of integral systems, we also obtain analogous results for the general quasilinear system of the form
\begin{equation}\label{quasilinear system}
  \left\{\begin{array}{cl}
    -\dv\,\mathcal{A}(x,\nabla u) = c_{1}(x)|x|^{\sigma_1}v^{q}, \medskip \\
    -\dv\,\mathcal{A}(x,\nabla v) = c_{2}(x)|x|^{\sigma_2}u^{p},
  \end{array}
\right.
\end{equation}
where the map $\mathcal{A}:\mathbb{R}^n \times \mathbb{R}^n \mapsto \mathbb{R}^n$ satisfies the following properties. The mapping $x \mapsto \mathcal{A}(x,\xi)$ is measurable for all $\xi \in \mathbb{R}^n;$ the mapping $\xi \mapsto \mathcal{A}(x,\xi)$ is continuous for a.e. $x \in \mathbb{R}^n$; for some positive constants $k_{1}\leq k_{2}$ there hold for all $\xi\in\mathbb{R}^n$ and a.e. $x \in \mathbb{R}^n$,
\begin{enumerate}[(a)]
\item $\mathcal{A}(x,\xi)\cdot\xi \geq k_{1}|\xi|^{\gamma}$,
\item $|\mathcal{A}(x,\xi)|\leq k_{2}|\xi|^{\gamma-1}$,
\item $(\mathcal{A}(x,\xi) - \mathcal{A}(x,\xi'))\cdot(\xi - \xi') > 0$ whenever $\xi \neq \xi'$,
\item $\mathcal{A}(x,\lambda\xi) = \lambda|\lambda|^{\gamma-2}\mathcal{A}(x,\xi)$ for all $\lambda \neq 0$.
\end{enumerate}
We say $(u,v)$ is a (weak) solution of \eqref{quasilinear system} if $u$ and $v$ belong to $W^{1,\gamma}_{loc}(\mathbb{R}^n)\cap C(\mathbb{R}^n)$ and satisfy the equations in the distribution sense. In the simple case where $\mathcal{A}(x,\xi) \doteq |\xi|^{\gamma-2}\xi$, $\dv\,\mathcal{A}(x,\nabla u)$ is just the classical $\gamma$-Laplace operator.

To illustrate the relationship between quasilinear operators and Wolff potentials, we recall a consequence of the global pointwise estimates of \cite{KM94}[Corollary 4.13]. Namely, if $(u,v)$ is a positive solution of \eqref{quasilinear system} satisfying
$$ \inf_{\mathbb{R}^n} u = \inf_{\mathbb{R}^n} v = 0,$$
then there exist positive constants $C_1$ and $C_2$, depending only on $n$ and $\gamma$ and the structural constants $k_1$ and $k_2$, such that 
\begin{equation}\label{KM}
  \left\{\begin{array}{cl}
C_{1}W_{1,\gamma}(c_{1}(y)|y|^{\sigma_1} v^q)(x) \leq u(x) \leq C_{2}W_{1,\gamma}(c_{1}(y)|y|^{\sigma_1} v^q)(x), \\
C_{1}W_{1,\gamma}(c_{2}(y)|y|^{\sigma_2} u^p)(x) \leq v(x) \leq C_{2}W_{1,\gamma}(c_{2}(y)|y|^{\sigma_2} u^p)(x).
  \end{array}
\right.
\end{equation}
In view of this and by applying our results on the Wolff type integral systems, we shall establish the following.
\begin{corollary}\label{quasilinear Liouville}
Let $\beta = 1$ and $\gamma \in (1,2]$. For any pair of double bounded coefficients $c_{1}(x)$ and $c_{2}(x)$, system \eqref{quasilinear system} has no positive solution $(u,v)$ satisfying $$\inf_{\mathbb{R}^n} u = \inf_{\mathbb{R}^n} v = 0,$$
whenever $pq \in (0,(\gamma-1)^2]$ or if $pq > (\gamma-1)^2$ and
$$\textstyle \max\{ q_0,\, p_0 \} = \max\Big\{\frac{\gamma(\gamma-1 + q) + (\gamma - 1)\sigma_1 + \sigma_2 q}{pq - (\gamma-1)^2}, \frac{\gamma(\gamma -1 + p) + (\gamma - 1)\sigma_2 + \sigma_1 p}{pq - (\gamma-1)^2} \Big\} \geq \frac{n-\gamma}{\gamma-1}.$$
\end{corollary}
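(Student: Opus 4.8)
The plan is to deduce Corollary~\ref{quasilinear Liouville} directly from Theorem~\ref{Liouville} by using the pointwise potential estimates \eqref{KM} to recast a solution of the quasilinear system as a solution of an integral system of the form \eqref{Wolff} with $\beta = 1$ and suitably chosen double bounded coefficients. So, arguing by contradiction, suppose $(u,v)$ is a positive solution of \eqref{quasilinear system} with $\inf_{\mathbb{R}^n} u = \inf_{\mathbb{R}^n} v = 0$. Since $u,v$ are positive and continuous (hence locally bounded, so in $L^1_{loc}$), the densities $c_1(y)|y|^{\sigma_1}v^q$ and $c_2(y)|y|^{\sigma_2}u^p$ are nonnegative and locally integrable, so their Wolff potentials $W_{1,\gamma}(\cdot)$ are well-defined; by \eqref{KM} they are moreover finite, positive and pointwise comparable to $u$ and $v$ on all of $\mathbb{R}^n$ (note $\beta\gamma = \gamma < n$ since $n \ge 3$ and $\gamma \le 2$).

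Next I would strip the interior coefficients off the potentials. Writing $1/C \le c_i(y) \le C$ and using that $\mu \mapsto \mu(B_t(x))$ is monotone while $W_{1,\gamma}(\lambda f) = \lambda^{1/(\gamma-1)}W_{1,\gamma}(f)$ for $\lambda > 0$, one gets $W_{1,\gamma}(c_1(y)|y|^{\sigma_1}v^q)(x) \simeq W_{1,\gamma}(|y|^{\sigma_1}v^q)(x)$ and similarly for the second equation, with constants depending only on $C$ and $\gamma$. Combining this with \eqref{KM}, the functions
\[ \widehat{c}_1(x) := \frac{u(x)}{W_{1,\gamma}(|y|^{\sigma_1}v^q)(x)}, \qquad \widehat{c}_2(x) := \frac{v(x)}{W_{1,\gamma}(|y|^{\sigma_2}u^p)(x)} \]
are double bounded, and by construction $(u,v)$ is then a positive solution of the integral system \eqref{Wolff} with $\beta = 1$ and coefficients $\widehat{c}_1, \widehat{c}_2$. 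Observe that for $\beta = 1$ the exponents $q_0$, $p_0$ and the threshold $(n-\beta\gamma)/(\gamma-1)$ reduce exactly to the quantities appearing in the statement of the corollary.

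It then remains to apply Theorem~\ref{Liouville} with $\beta = 1$: when $pq \le (\gamma-1)^2$, or when $pq > (\gamma-1)^2$ and $\max\{q_0,p_0\} > (n-\gamma)/(\gamma-1)$, that theorem immediately rules out the existence of $(u,v)$; and since $\gamma \in (1,2]$ here, its endpoint clause also handles $\max\{q_0,p_0\} = (n-\gamma)/(\gamma-1)$. This exhausts all the cases in the hypothesis and yields the desired contradiction.

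I do not expect a genuine obstacle: the only point that needs care is verifying that the ratios $\widehat{c}_i$ are bounded both above and away from zero everywhere on $\mathbb{R}^n$, and this is precisely the content of the two-sided estimate \eqref{KM} combined with the elementary monotonicity and homogeneity of $W_{1,\gamma}$ used to remove the interior coefficients. One should also note that the standing assumptions of \eqref{convention} on the $\sigma_i$ (in particular $\sigma_i > -\gamma$), which make \eqref{Wolff} meaningful, are in force for $\beta = 1$.
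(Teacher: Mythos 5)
Your proposal is correct and follows essentially the same route as the paper: apply the global pointwise Kilpel\"{a}inen--Mal\'{y} estimate \eqref{KM} to recast $(u,v)$ as a positive solution of the Wolff integral system \eqref{Wolff} with $\beta = 1$ and double bounded coefficients, then invoke Theorem~\ref{Liouville}. You merely spell out more explicitly the step (via the homogeneity $W_{1,\gamma}(\lambda f) = \lambda^{1/(\gamma-1)}W_{1,\gamma}(f)$ and monotonicity in $\mu$) that removes the interior coefficients $c_i(y)$ from inside the Wolff potential, which the paper treats as immediate.
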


%

We have the following decay properties of solutions for the quasilinear systems. We begin with a corollary of Theorem \ref{fast theorem} for completeness sake.

\begin{corollary}[Corollary 2 in \cite{Villavert:14e}]\label{fast quasilinear}
Let $\beta = 1$, $\gamma \in (1,2]$, $q\geq p > 1$, $\sigma_1 \leq \sigma_2 \leq 0$ and let $(u,v)$ be a positive solution of system \eqref{quasilinear system} with $q_0 + p_0 \leq \frac{n-\gamma}{\gamma -1}$. Then $(u,v)$ is an integrable solution if and only if $(u,v)$ is bounded, decaying and decays with the fast rates as $|x|\rightarrow \infty$, i.e.,
$$u(x) \simeq |x|^{-\frac{n-\gamma}{\gamma-1}}$$
and
\begin{equation*}
v(x) \simeq \left\{\begin{array}{ll}
|x|^{-\frac{n-\gamma}{\gamma-1}}, 							                     & \text{ if }\, p(\frac{n-\gamma}{\gamma-1}) - \sigma_2 > n; \medskip \\
|x|^{-\frac{n-\gamma}{\gamma-1}}(\ln |x|)^{\frac{1}{\gamma-1}},	                 & \text{ if }\, p(\frac{n-\gamma}{\gamma-1}) - \sigma_2 = n; \\
|x|^{- \frac{p(\frac{n-\gamma}{\gamma-1}) - (\gamma + \sigma_2)}{\gamma-1}}, & \text{ if }\, p(\frac{n-\gamma}{\gamma-1}) - \sigma_2 < n.
\end{array}
\right.
\end{equation*}
 
\end{corollary}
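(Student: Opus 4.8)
\medskip

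\noindent\emph{Proof proposal.} The plan is to reduce this corollary to Theorem \ref{fast theorem} by means of the Kilpel\"ainen--Mal\'y bounds \eqref{KM}. Suppose $(u,v)$ is a positive solution of \eqref{quasilinear system} with $\inf_{\mathbb{R}^n}u = \inf_{\mathbb{R}^n}v = 0$. Since $u,v \in W^{1,\gamma}_{loc}(\mathbb{R}^n)\cap C(\mathbb{R}^n)$ are in particular finite-valued and locally integrable, the \emph{lower} bounds in \eqref{KM} show that $W_{1,\gamma}(c_1(y)|y|^{\sigma_1}v^q)(x)$ and $W_{1,\gamma}(c_2(y)|y|^{\sigma_2}u^p)(x)$ are finite at every $x$, and they are strictly positive because $u,v>0$ are continuous.

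The first real step is to remove the coefficients $c_i$ from inside the Wolff potentials. Because each $c_i$ is double bounded and $s\mapsto s^{1/(\gamma-1)}$ is increasing, comparing the measures $c_i(y)|y|^{\sigma_i}w\,dy$ and $|y|^{\sigma_i}w\,dy$ on each ball $B_t(x)$ gives
$$
\frac{1}{C}\,W_{1,\gamma}\big(|y|^{\sigma_i}w\big)(x) \;\leq\; W_{1,\gamma}\big(c_i(y)|y|^{\sigma_i}w\big)(x) \;\leq\; C\,W_{1,\gamma}\big(|y|^{\sigma_i}w\big)(x),
$$
with $C$ depending only on $\gamma$ and the double-bound constant. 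Combining this with \eqref{KM} and setting
$$
\hat c_1(x) := \frac{u(x)}{W_{1,\gamma}(|y|^{\sigma_1}v^q)(x)}, \qquad \hat c_2(x) := \frac{v(x)}{W_{1,\gamma}(|y|^{\sigma_2}u^p)(x)},
$$
we see that $\hat c_1, \hat c_2$ are double bounded functions and that $(u,v)$ is a positive solution of the integral system \eqref{Wolff} with $\beta = 1$ and coefficients $\hat c_1, \hat c_2$ (the requirement $u,v\in L^1_{loc}$ is automatic from continuity).

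It then remains only to invoke Theorem \ref{fast theorem} with $\beta = 1$. The standing hypotheses $\gamma\in(1,2]$, $q\geq p > 1$, $\sigma_1\leq\sigma_2\leq 0$ and $q_0 + p_0 \leq \frac{n-\gamma}{\gamma-1}$ are exactly \eqref{non-subcritical} in the case $\beta = 1$, so the equivalence of statements (a) and (c) in that theorem applies verbatim after replacing $n-\beta\gamma$ by $n-\gamma$: one gets that $(u,v)\in L^{r_0}(\mathbb{R}^n)\times L^{s_0}(\mathbb{R}^n)$ if and only if $(u,v)$ is bounded, decaying and decays with the fast rates $u(x)\simeq |x|^{-\frac{n-\gamma}{\gamma-1}}$ together with the stated three-case behavior for $v$. (Under these hypotheses the integrable and optimal integrable solutions coincide, per the remark following Theorem \ref{improved fast theorem}, so the terminology here is unambiguous.)

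The proof is thus essentially a transfer argument, and the only point requiring care — more a matter of bookkeeping than a genuine obstacle — is the uniform-in-$x$ comparison $W_{1,\gamma}(c_i(y)|y|^{\sigma_i}w) \simeq W_{1,\gamma}(|y|^{\sigma_i}w)$ and the verification that \eqref{KM} is applicable; both rest solely on the double-boundedness of the $c_i$ and on $u,v$ being finite continuous functions with zero infimum over $\mathbb{R}^n$. Once system \eqref{quasilinear system} has been recast as an instance of \eqref{Wolff}, Theorem \ref{fast theorem} supplies the conclusion.
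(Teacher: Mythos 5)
Your approach is exactly the transfer argument the paper uses for the analogous Corollaries \ref{slow quasilinear} and \ref{fast quasilinear improved} (the paper itself does not reprove Corollary \ref{fast quasilinear}, citing it from \cite{Villavert:14e}, but it proves its siblings this way): apply \eqref{KM} to recast \eqref{quasilinear system} as an instance of \eqref{Wolff} with $\beta=1$ and double bounded coefficients, then invoke the corresponding integral-equation theorem, here Theorem \ref{fast theorem}. Your intermediate lemma pulling the $c_i$ out of the Wolff potential and the definition of $\hat c_1,\hat c_2$ are both correct and are exactly what the paper sweeps under the phrase ``by virtue of estimate \eqref{KM}, there exist double bounded coefficients \ldots''.

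The only thing you take for granted is the hypothesis $\inf_{\mathbb{R}^n}u=\inf_{\mathbb{R}^n}v=0$ that \eqref{KM} requires; the corollary does not list it. You should say a word about why this is harmless: if $(u,v)$ is an integrable solution then continuity plus $u\in L^{r_0},\,v\in L^{s_0}$ force both infima to vanish, and conversely fast decay also forces them to vanish; if neither holds, both sides of the ``if and only if'' are false and the statement is vacuous. With that one-line remark inserted, the proof is complete.
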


\begin{corollary}\label{slow quasilinear}
Let $\beta = 1$, $\gamma \in (1,n)$ and let $(u,v)$ be a bounded and decaying positive solution of \eqref{quasilinear system}. If $(u,v)$ is not integrable, then it necessarily decays with the slow rates as $|x|\rightarrow \infty$, i.e.,
$$ u(x) \simeq |x|^{-q_0} \,\text{ and }\, v(x) \simeq |x|^{-p_0}.$$
\end{corollary}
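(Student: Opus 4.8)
The plan is to deduce Corollary \ref{slow quasilinear} directly from Theorem \ref{slow theorem}(iii) by transferring the hypotheses on the quasilinear system \eqref{quasilinear system} over to the associated Wolff integral system \eqref{Wolff} via the Kilpeläinen--Malý estimates \eqref{KM}. First I would observe that since $(u,v)$ is a bounded and decaying positive solution, in particular $u(x),v(x)\to 0$ as $|x|\to\infty$ and $u,v>0$, so by the strong maximum principle for the quasilinear operators (or simply because $u,v$ are positive superharmonic-type functions tending to zero at infinity) we have $\inf_{\mathbb{R}^n} u = \inf_{\mathbb{R}^n} v = 0$. This is exactly the hypothesis needed to invoke \eqref{KM} from \cite{KM94}, which yields positive constants $C_1,C_2$ with
\begin{equation*}
C_{1}W_{1,\gamma}(c_{1}|y|^{\sigma_1} v^q)(x) \leq u(x) \leq C_{2}W_{1,\gamma}(c_{1}|y|^{\sigma_1} v^q)(x)
\end{equation*}
and the analogous chain for $v$.

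Next I would absorb the constants $C_1,C_2$ into the coefficients. Since $W_{1,\gamma}$ is positively homogeneous of degree $1/(\gamma-1)$ in its measure argument, the estimate $C_1 W_{1,\gamma}(c_1|y|^{\sigma_1}v^q) \le u \le C_2 W_{1,\gamma}(c_1|y|^{\sigma_1}v^q)$ can be rewritten as $u(x) = \tilde{c}_1(x) W_{1,\gamma}(c_1|y|^{\sigma_1}v^q)(x)$ for some double bounded function $\tilde c_1$ (namely $\tilde c_1 = u/W_{1,\gamma}(c_1|y|^{\sigma_1}v^q)$, which lies between $C_1$ and $C_2$), and similarly $v(x) = \tilde c_2(x) W_{1,\gamma}(c_2|y|^{\sigma_2}u^p)(x)$. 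Folding $c_i$ and $\tilde c_i$ together (using that products and appropriate powers of double bounded functions are double bounded), we conclude that $(u,v)$ is, up to relabeling the double bounded coefficients, a solution of the Wolff integral system \eqref{Wolff} with $\beta = 1$. Moreover $(u,v)$ is bounded and decaying for \eqref{Wolff} precisely because it was so for \eqref{quasilinear system}, and "integrable" is a property of the pair $(u,v)$ alone (membership in $L^{r_0}\times L^{s_0}$), so "not integrable" carries over verbatim.

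Having identified $(u,v)$ as a bounded, decaying, non-integrable positive solution of \eqref{Wolff} with $\beta=1$, I would simply apply Theorem \ref{slow theorem}(iii) — whose only structural hypotheses are $\beta>0$, $\gamma>1$, $\beta\gamma<n$, all satisfied here since $\gamma\in(1,n)$ and $\beta=1$ — to conclude $u(x)\simeq |x|^{-q_0}$ and $v(x)\simeq |x|^{-p_0}$ as $|x|\to\infty$, with the same exponents $q_0,p_0$ since these depend only on $p,q,\sigma_1,\sigma_2,\gamma$ and $\beta\gamma = \gamma$ when $\beta=1$. This finishes the proof.

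**The main obstacle** I anticipate is the verification that $\inf_{\mathbb{R}^n} u = \inf_{\mathbb{R}^n} v = 0$, i.e., justifying that "decaying" genuinely forces the infimum to vanish — this is where one must be careful about what class of weak solutions is allowed and invoke the correct comparison/maximum principle for $\dv\,\mathcal{A}(x,\nabla\cdot)$; once \eqref{KM} is legitimately in hand, the rest is bookkeeping with double bounded functions and a direct citation of Theorem \ref{slow theorem}. A secondary technical point worth a sentence is checking that $W_{1,\gamma}(c_i|y|^{\sigma_i}\cdot)$ is finite and positive (not identically $+\infty$ or $0$) at the relevant points, which is automatic from the two-sided bound \eqref{KM} since $u,v$ are finite positive continuous functions.
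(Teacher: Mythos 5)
Your argument is correct and takes essentially the same route as the paper's: use the Kilpel\"{a}inen--Mal\'{y} global estimate \eqref{KM} to recast $(u,v)$ as a positive solution of the Wolff system \eqref{Wolff} with $\beta = 1$ and fresh double bounded coefficients, and then cite Theorem \ref{slow theorem}(iii). The only remark worth making is that your appeal to a maximum principle to get $\inf_{\mathbb{R}^n} u = \inf_{\mathbb{R}^n} v = 0$ is unnecessary --- since $u,v > 0$ and the decay hypothesis gives $u(x), v(x) \to 0$ as $|x| \to \infty$, the infima vanish immediately, which is what you also note in your parenthetical.
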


\begin{corollary}\label{fast quasilinear improved}
Let $\beta = 1$, $\gamma \in (1,n)$, $q \geq p > 1$, $\sigma_1 \leq \sigma_2 $ and let $(u,v)$ be a bounded and decaying positive solution of system \eqref{quasilinear system} satisfying $q_{0} + p_{0} \leq \frac{n-\gamma}{\gamma - 1}$. Then $(u,v)$ is an optimal integrable solution if and only if $(u,v)$ decays with the fast rates as $|x|\rightarrow \infty$.
\end{corollary}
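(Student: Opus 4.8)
The plan is to deduce Corollary~\ref{fast quasilinear improved} from its integral-equation analogue, Theorem~\ref{improved fast theorem}, by invoking the pointwise potential bounds \eqref{KM} to recast the quasilinear solution as a solution of a Wolff type integral system \eqref{Wolff} with $\beta = 1$. First I would observe that a bounded and decaying positive solution $(u,v)$ of \eqref{quasilinear system} automatically satisfies $\inf_{\mathbb{R}^n} u = \inf_{\mathbb{R}^n} v = 0$, since $u(x), v(x) \to 0$ as $|x| \to \infty$. Hence \eqref{KM} applies and provides positive constants $C_1, C_2$ with
$$ C_1 W_{1,\gamma}(c_1(y)|y|^{\sigma_1}v^q)(x) \le u(x) \le C_2 W_{1,\gamma}(c_1(y)|y|^{\sigma_1}v^q)(x) $$
together with the analogous two-sided bound for $v$.

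The next step is to absorb both the variable coefficients $c_i$ and the two-sidedness of \eqref{KM} into new double bounded coefficients. Since $c_1$ is double bounded, $\int_{B_t(x)} c_1(y)|y|^{\sigma_1}v^q\,dy \simeq \int_{B_t(x)} |y|^{\sigma_1}v^q\,dy$ uniformly in $x$ and $t$; raising to the power $1/(\gamma - 1)$ and integrating in $dt/t$ preserves this comparison, so $W_{1,\gamma}(c_1(y)|y|^{\sigma_1}v^q)(x) \simeq W_{1,\gamma}(|y|^{\sigma_1}v^q)(x)$, and likewise for the second potential. Combining with \eqref{KM}, there is a constant $C \ge 1$ so that
$$ \frac{1}{C} \le \frac{u(x)}{W_{1,\gamma}(|y|^{\sigma_1}v^q)(x)} \le C \quad\text{and}\quad \frac{1}{C} \le \frac{v(x)}{W_{1,\gamma}(|y|^{\sigma_2}u^p)(x)} \le C $$
for a.e.\ $x \in \mathbb{R}^n$; note the potentials appearing here are finite a.e., since by \eqref{KM} and boundedness of $u$ we have $W_{1,\gamma}(|y|^{\sigma_1}v^q)(x) \lesssim u(x) < \infty$, and similarly for the other. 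Setting $\tilde c_1(x) := u(x)/W_{1,\gamma}(|y|^{\sigma_1}v^q)(x)$ and $\tilde c_2(x) := v(x)/W_{1,\gamma}(|y|^{\sigma_2}u^p)(x)$ then yields double bounded functions for which the \emph{same} pair $(u,v)$ solves the integral system \eqref{Wolff} with $\beta = 1$ and coefficients $\tilde c_1, \tilde c_2$.

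With this reduction in hand the rest is immediate: $(u,v)$ is now a bounded and decaying positive solution of \eqref{Wolff} with $\beta = 1$ (so $\beta\gamma = \gamma < n$), $q \ge p$, $\sigma_1 \le \sigma_2$, and $q_0 + p_0 \le (n - \gamma)/(\gamma - 1)$, i.e., it satisfies all hypotheses of Theorem~\ref{improved fast theorem}. Since "optimal integrable solution" and "decays with the fast rates" are properties of $(u,v)$ as a pair of functions and depend only on $n, \gamma, p, q, \sigma_1, \sigma_2$ (with $\beta = 1$), not on which system they solve, Theorem~\ref{improved fast theorem} gives at once that $(u,v)$ is an optimal integrable solution if and only if it decays with the fast rates.

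The argument is short because the substance was already done in proving Theorems~\ref{fast theorem} and \ref{improved fast theorem}; the only point requiring genuine care is the reduction step, specifically the comparability $W_{1,\gamma}(c_i(y)|y|^{\sigma_i}\,\cdot\,) \simeq W_{1,\gamma}(|y|^{\sigma_i}\,\cdot\,)$ under double bounded coefficients and the a.e.\ finiteness of the potentials. I expect no serious obstacle here, and the same template also underlies Corollaries~\ref{quasilinear Liouville} and \ref{slow quasilinear}.
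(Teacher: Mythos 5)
Your proposal is correct and follows essentially the same route as the paper: invoke the Kilpel\"{a}inen--Mal\'{y} estimate \eqref{KM} to recast $(u,v)$ as a solution of the Wolff integral system with new double bounded coefficients, then apply Theorem~\ref{improved fast theorem}. You spell out a few details the paper glosses over (that bounded-and-decaying forces $\inf u = \inf v = 0$ so that \eqref{KM} is applicable, that the original coefficients $c_i$ can be absorbed since $W_{1,\gamma}$ is monotone and $\gamma$-homogeneous in the density, and that the potentials are finite a.e.), but these are elaborations rather than a different argument.
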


The remaining parts of this paper are organized as follows. In \S \ref{section2}, the proof of Theorem \ref{Liouville} is provided followed by the proof of Theorem \ref{theorem2}. Then, \S \ref{section3} and \S \ref{section4}, respectively, contains the proof of Theorem \ref{slow theorem} and Theorem \ref{improved fast theorem}. Lastly, \S \ref{section5} contain the proofs for the corresponding results on the quasilinear systems.  

\begin{remark}[Notation]
Throughout this paper, we adopt the standard convention that $c$, $C$, $C_{1}, C_{2}\ldots,$ are positive universal constants in the inequalities that may change from line to line (and sometimes within the same line itself).
\end{remark}

\section{Existence and Liouville property of solutions}\label{section2}

\subsection{Non-existence of positive solutions} We now prove our Liouville type theorem.
\begin{proof}[Proof of Theorem \ref{Liouville}]

We proceed by contradiction. That is, assume there is a positive solution $(u,v)$. Let $|x| > R$ for some suitable $R>0$ and note that       
$$0 < C_1 \leq \int_{B_{R}(0)} |y|^{\sigma_1}v^{q}(y) \,dy \leq C_2 < \infty.$$ 
Then, from the first integral equation there holds
\begin{equation*}
u(x) \geq C\int_{|x|+R}^{\infty} \Big( \frac{\int_{B_{R}(0)} |y|^{\sigma_1}v^{q}(y) \,dy}{t^{n-\beta\gamma}} \Big)^{\frac{1}{\gamma-1}} \,\frac{dt}{t} \geq C\int_{|x|+R}^{\infty} t^{-\frac{n-\beta\gamma}{\gamma-1}}\,\frac{dt}{t} \geq \frac{C}{|x|^{a_0}} ,
\end{equation*}
where $a_0 = (n-\beta\gamma)/(\gamma-1)$. Inserting this into the second integral equation yields for $|x|>R$, 
\begin{align*}
v(x) \geq {} & C\int_{2|x|}^{\infty}\Big( \int_{B_{t-|x|}(0)\backslash B_{\frac{t-|x|}{2}}(0)} \frac{dy}{|y|^{pa_0 - \sigma_2}}t^{-(n-\beta\gamma)} \Big)^{\frac{1}{\gamma - 1}}\,\frac{dt}{t} \\
\geq {} & C\int_{2|x|}^{\infty} t^{-\frac{p a_0 -\sigma_2 -\beta\gamma}{\gamma - 1}}\,\frac{dt}{t}.
\end{align*}
Now, if $pa_0 - \sigma_2 - \beta\gamma \leq 0$, then the previous estimate implies $v(x) = \infty$ and we arrive at the desired contradiction. Otherwise, if $pa_0 - \sigma_2 - \beta\gamma > 0$, we deduce instead 
\begin{equation*}
v(x) \geq C|x|^{- b_0}, ~\text{ for } |x| > R,
\end{equation*}
where $b_0 = (pa_0 - \sigma_2 - \beta\gamma)/(\gamma-1)$. Likewise, using the previous estimate, if $qb_0 - \sigma_1 - \beta\gamma \leq 0$ then $u(x) = \infty$; otherwise, if $qb_0 - \sigma_1 - \beta\gamma > 0$, inserting the last estimate into the first integral equation yields
\begin{equation*}
u(x) \geq C|x|^{- a_1}, ~\text{ for } |x| > R,
\end{equation*}
where $a_1 = (qb_0 - \sigma_1 - \beta\gamma)/(\gamma-1)$. Assuming that we can continue this procedure, we arrive at 
$$ u(x) \geq C|x|^{-a_k} \,\text{ and }\, v(x) \geq C|x|^{-b_k}, \,\text{ for }\, |x| > R,$$
where 
\begin{equation*}
b_k = \frac{pa_k - \sigma_2 - \beta\gamma}{\gamma-1} \,\text{ and }\, a_k = \frac{qb_{k-1} - \sigma_1 - \beta\gamma}{\gamma-1} \,\text{ for }\, k =1,2,3,\ldots.
\end{equation*}
Actually, using the definitions of $a_k$ and $b_k$, we calculate that
\begin{align}\label{sequence}
a_j = {} & \frac{q b_{j-1} -  \sigma_1 - \beta\gamma}{\gamma-1} = \frac{pq}{(\gamma-1)^2} a_{j-1} - \frac{\beta\gamma(\gamma-1+q) + (\gamma -1)\sigma_1 + q\sigma_2}{(\gamma-1)^2} \notag \\
= {} & \Big(\frac{pq}{(\gamma-1)^2}\Big)^2 a_{j-2} - \frac{\beta\gamma(\gamma-1+q) + (\gamma -1)\sigma_1 + q\sigma_2}{(\gamma-1)^2}\Big( 1 + \frac{pq}{(\gamma-1)^2} \Big) \notag \\
\vdots \notag \\
= {} & r_{0}^j a_{0} - \frac{\beta\gamma(\gamma-1+q) + (\gamma -1)\sigma_1 + q\sigma_2}{(\gamma-1)^2} \sum_{i=0}^{j-1} r_{0}^{i},
\end{align}
where $j = 1,2,3,\ldots,$ and $$ r_{0} =  \frac{pq}{(\gamma-1)^2}. $$
We claim that this iteration process must stop after a finite number of steps. To see why, consider the two cases: when $pq \in (0,(\gamma-1)^2]$ and when $pq > (\gamma-1)^2$. \medskip

\noindent{\bf Case 1: } Suppose $pq \in (0,(\gamma-1)^2]$. If $pq = (\gamma-1)^2$, then \eqref{sequence} implies that 
\begin{equation*}
a_{j} = a_{0} - j\frac{\beta\gamma(\gamma-1+q)+(\gamma-1)\sigma_1 + q\sigma_2}{(\gamma-1)^2}.
\end{equation*}
Therefore, $a_j,\,b_j \rightarrow -\infty$ as $j \rightarrow \infty$. If $pq \in (0,(\gamma-1)^2)$, then \eqref{sequence} implies that
\begin{equation*}
a_j = r_{0}^{j}a_0 - \frac{\beta\gamma(\gamma-1+q) + (\gamma-1)\sigma_1 + q\sigma_2}{(\gamma-1)^2}\frac{1-r_{0}^j}{1-r_{0}}.
\end{equation*}
Sending $j \rightarrow \infty$ yields 
$$a_j \rightarrow q_0 < 0 \,\text{ and thus }\, b_j \rightarrow (pq_0 - \sigma_2 - \beta\gamma)/(\gamma-1) < 0.$$ 
In either case, we can find a suitably large $j_0$ such that $a_{j_0},b_{j_0} < 0$ and this implies $u(x),v(x) = \infty$, which is impossible. \medskip

\noindent{\bf Case 2(a): } Let $pq > (\gamma-1)^2$ and $\max \Big\lbrace q_0, p_0 \Big\rbrace > \frac{n-\beta\gamma}{\gamma - 1}$. 

Hereafter, we denote
\begin{equation*}
M = \max \Big\lbrace q_0, p_0 \Big\rbrace.
\end{equation*} 
Let us first assume $M = q_0$. By virtue of \eqref{sequence}, we can find a large $j_0$ such that 
\begin{align*}
a_{j_0} = {} & r_{0}^{j_0}a_0 - \frac{\beta\gamma(\gamma-1+q)+(\gamma-1)\sigma_1 + q\sigma_2}{(\gamma-1)^2}\frac{r_{0}^{j_0}-1}{r_{0}-1} 
= r_{0}^{j_0}(a_0 - M) + M \\
= {} & r_{0}^{j_0}\Big(\frac{n-\beta\gamma}{\gamma-1} - q_0 \Big) + q_0 < 0.
\end{align*}
Thus, $u(x) = \infty$ and we have a contradiction. Likewise, if $pq>(\gamma-1)^2$ but instead $M = p_0$, then we can also apply the previous iteration argument to deduce a contradiction. \medskip

\noindent{\bf Case 2(b): } Let $pq> (\gamma-1)^2$ and $M = \frac{n-\beta\gamma}{\gamma-1}$ where $\gamma \in (1,2]$. Without loss of generality, we assume $M = p_0$.
By H\"{o}lder's inequality,
\begin{align*}
\int_{0}^{R}\int_{B_{t}(x)} |y|^{\sigma_1}v^{q}(y) {} & \,dy \,dt \\
\leq {} & CR^{n-\beta\gamma + 1}\Big( \int_{0}^{R} \Big(\frac{\int_{B_{t}(x)}|y|^{\sigma_1}v^{q}(y)\,dy}{t^{n-\beta\gamma}}\Big)^{\frac{1}{\gamma-1}}\,\frac{dt}{t}\Big)^{\gamma-1}.
\end{align*}
Thus, for $x \in B_{R/4}(0)$,
\begin{align*}
u(x) \geq {} & C\int_{0}^{R} \Big(\frac{\int_{B_{t}(x)}|y|^{\sigma_1}v^{q}(y)\,dy}{t^{n-\beta\gamma}}\Big)^{\frac{1}{\gamma-1}} \,\frac{dt}{t} 
\\ \geq {} & CR^{-\frac{n-\beta\gamma + 1}{\gamma - 1}}\Big( \int_{0}^{R}\int_{B_{t}(x)} |y|^{\sigma_1}v^{q}(y)\,dy \,dt \Big)^{\frac{1}{\gamma-1}} 
\\ \geq {} & CR^{-\frac{n-\beta\gamma + 1}{\gamma - 1}}\Big( \int_{|x| + R/4}^{R}\int_{B_{t}(x)} |y|^{\sigma_1}v^{q}(y)\,dy \,dt \Big)^{\frac{1}{\gamma-1}} \\
\geq {} & CR^{-\frac{n-\beta\gamma}{\gamma - 1}}\Big( \int_{B_{R/4}(0)} |y|^{\sigma_1}v^{q}(y) \,dy\Big)^{\frac{1}{\gamma-1}}.
\end{align*}
As a result, we obtain
\begin{equation}\label{up}
u^{p}(x) \geq CR^{-\frac{n-\beta\gamma}{\gamma - 1}p}\Big( \int_{B_{R/4}(0)} |y|^{\sigma_1}v^{q}(y) \,dy\Big)^{\frac{p}{\gamma-1}}.
\end{equation}
Similarly, we can show that
\begin{equation}\label{vp}
v^{q}(x) \geq CR^{-\frac{n-\beta\gamma }{\gamma - 1}q}\Big( \int_{B_{R/4}(0)} |y|^{\sigma_2}u^{p}(y) \,dy\Big)^{\frac{q}{\gamma-1}}.
\end{equation}
If we multiply \eqref{up} by $|x|^{\sigma_2}$, integrate over $B_{R/4}(0) \backslash B_{\epsilon}(0)$ for suitably small $\epsilon > 0$, apply \eqref{vp} then send $\epsilon \rightarrow 0$, we get
\begin{align*}
\int_{B_{R/4}(0)} {} & |x|^{\sigma_2}u^{p}(x) \,dx \\
\geq {} & \frac{C}{R^{\frac{n-\beta\gamma}{\gamma-1}p -\sigma_2 - n + \frac{n-\beta\gamma}{(\gamma-1)^2 }pq - \frac{\sigma_1 p}{\gamma-1} - \frac{np}{\gamma-1}}} \Big(\int_{B_{R/4}(0)} |x|^{\sigma_2}u^{p}(x) \,dx\Big)^{\frac{pq}{(\gamma-1)^2}} \\
\geq {} & C\Big(\int_{B_{R/4}(0)} |x|^{\sigma_2}u^{p}(x) \,dx\Big)^{\frac{pq}{(\gamma-1)^2}},
\end{align*}
where the above positive constant $C$ is independent of $R$ since
\begin{align*}
\frac{n-\beta\gamma}{\gamma-1}p -  {} & \sigma_2 - n +  \frac{n-\beta\gamma}{(\gamma-1)^2 } pq - \frac{\sigma_1 p}{\gamma-1} - \frac{np}{\gamma-1} \\
= {} & \frac{pq - (\gamma-1)^2}{\gamma-1}\Big\lbrace \frac{n-\beta\gamma}{\gamma-1} - p_0 \Big\rbrace = 0.
\end{align*}
Thus, sending $R \rightarrow \infty$ implies that $|x|^{\sigma_2}u^p (x) \in L^{1}(\mathbb{R}^n)$. If we repeat the previous argument but instead we integrate over $B_{R/4}(0)\backslash B_{R/8}(0)$, then
\begin{equation*}
\int_{B_{R/4}(0)\backslash B_{R/8}(0)} |x|^{\sigma_2}u^{p}(x) \,dx \geq C\Big(\int_{B_{R/4}(0)} |x|^{\sigma_2}u^{p}(x) \,dx\Big)^{\frac{pq}{(\gamma-1)^2}}
\end{equation*}
where $C$ is independent of $R$. Hence, sending $R \rightarrow \infty$ yields
\begin{equation*}
\int_{\mathbb{R}^n} |x|^{\sigma_2}u^{p}(x) \,dx = 0.
\end{equation*}
This implies $u\equiv 0$ and we deduce a contradiction. This completes the proof of the theorem.
\end{proof}

\subsection{Existence of solutions}

\begin{proof}[Proof of Theorem \ref{theorem2}]
Indeed, we find double bounded coefficients with the positive radial solution pair 
\begin{equation*}
u(x) = \frac{1}{(1+|x|^2)^{\theta_1}} \,\text{ and }\, v(x) = \frac{1}{(1+|x|^2)^{\theta_2}},
\end{equation*}
where the rates $\theta_1$ and $\theta_2$ are specified shortly below. In fact, for completeness sake, we provide a solution pair with the slow decay rates and another pair with the fast decay rates.

\noindent{(i)} Choose the slow decay rates:
\begin{equation*}
2\theta_1 = q_0 \,\text{ and }\, 2\theta_2 = p_0,
\end{equation*}
so that $\beta\gamma < 2p\theta_1 - \sigma_2 < n$ and $\beta\gamma < 2q\theta_2 - \sigma_1 < n$. For $|x|\leq R$ with a suitable choice for $R>0$, it is obvious that $u(x)$ and $v(x)$, respectively, are proportional to $W_{\beta,\gamma}(|y|^{\sigma_1}v^q)(x)$ and $W_{\beta,\gamma}(|y|^{\sigma_2}u^p)(x)$. Thus, we may restrict ourselves to the case where $|x|$ is suitably large. Consider the splitting
\begin{equation*}
W_{\beta,\gamma}(|y|^{\sigma_1}v^q)(x) = \Big(\int_{0}^{|x|/2} + \int_{|x|/2}^{\infty}\Big) \Big(\frac{\int_{B_{t}(x)} \frac{|y|^{\sigma_1}}{(1+|y|^2)^{q\theta_2}}\, dy}{t^{n-\beta\gamma}}\Big)^{\frac{1}{\gamma-1}} \,\frac{dt}{t} = H_1 + H_2.
\end{equation*}
Notice that for $y \in B_{t}(x)$,
$$|x|/2 \leq |y| \leq 3|x|/2 \,\text{ whenever }\, |x|/2 \geq t \geq 0,$$ 
and since $ \frac{2\theta_2 q - \sigma_1 - \beta\gamma}{2(\gamma-1)}-\theta_1 = 0, $ there holds
\begin{align*}
H_{1} \geq {} & \frac{1}{C}(1+|x|^2)^{-\frac{q\theta_2 -\sigma_1/2}{\gamma-1}}\int_{0}^{|x|/2} |B_{t}(x)|^{\frac{1}{\gamma -1}} t^{-\frac{n-\beta\gamma}{\gamma-1}} \,\frac{dt}{t}  \\
\geq {} & \frac{1}{C}(1+|x|^2)^{-\frac{q\theta_2 -\sigma_1/2}{\gamma-1}}\int_{0}^{|x|/2} t^{\frac{\beta\gamma}{\gamma-1}} \,\frac{dt}{t}  
\geq \frac{1}{C}(1+|x|^2)^{-\frac{2\theta_2 q - \sigma_1 - \beta\gamma}{2(\gamma-1)} } \geq \frac{1}{C}u(x).
\end{align*}
Similarly, there holds
\begin{equation*}
H_1 \leq C(1+|x|^2)^{-\frac{q\theta_2 -\sigma_1/2}{\gamma-1}}\int_{0}^{|x|/2} t^{\frac{\beta\gamma}{\gamma-1}} \,\frac{dt}{t} \leq C(1+|x|^2)^{-\frac{2\theta_2 q - \sigma_1 - \beta\gamma}{2(\gamma-1)}} \leq Cu(x).
\end{equation*}
Hence, we have that
\begin{equation}\label{H1}
C^{-1}H_{1}  \leq u(x) \leq CH_{1}
\end{equation}
for some positive constant $C$. On the other hand, there holds
\begin{align*}
H_{2} = {} & \int_{|x|/2}^{\infty} \Big(\frac{\int_{B_{t}(x)} \frac{|y|^{\sigma_1}}{(1+|y|^2)^{q\theta_2}}\, dy}{t^{n-\beta\gamma}}\Big)^{\frac{1}{\gamma-1}}\,\frac{dt}{t} \\
\leq {} & C\int_{|x|/2}^{\infty} \Big(\frac{\int_{B_{t + |x|}(0)} |y|^{\sigma_1 - 2q\theta_2}\, dy}{t^{n-\beta\gamma}}\Big)^{\frac{1}{\gamma-1}}\,\frac{dt}{t} \\
\leq {} & C\int_{|x|/2}^{\infty} \Big( \frac{\int_{0}^{t + |x|} r^{n + \sigma_1 - 2\theta_{2}q}\, \frac{dr}{r} }{t^{n-\beta\gamma}} \Big)^{\frac{1}{\gamma-1}}\,\frac{dt}{t} \\
\leq {} & C\int_{|x|/2}^{\infty} t^{\frac{\sigma_1 + \beta\gamma - 2q\theta_2}{\gamma - 1} }\,\frac{dt}{t} 
\leq C(1+|x|^2)^{-\frac{2\theta_2 - \sigma_1 - \beta\gamma}{2(\gamma - 1)} } \leq Cu(x).
\end{align*}
If  $t \geq |x|/2$, we have that $|x|/2 \leq |y| \leq 3|x|/2$ for $y \in B_{|x|/2}(x) \subset B_{t}(x)$ and thus
\begin{align*}
H_{2} \geq {} & \int_{|x|/2}^{\infty} \Big(\frac{\int_{B_{|x|/2}(x)} \frac{|y|^{\sigma_1}}{(1+|y|^2)^{q\theta_2}}\, dy}{t^{n-\beta\gamma}}\Big)^{\frac{1}{\gamma-1}}\,\frac{dt}{t} \\
\geq {} & \frac{1}{C}(1+|x|^2)^{-\frac{2\theta_2 q - \sigma_1 - n}{2(\gamma - 1)} }\int_{|x|/2}^{\infty} t^{-\frac{n-\beta\gamma}{\gamma-1}}\,\frac{dt}{t} \geq \frac{1}{C}u(x).
\end{align*}
Hence, $C^{-1}H_{2} \leq u(x) \leq CH_{2}$ for some positive constant $C$, and by combining this with \eqref{H1}, we obtain
\begin{equation*}
u(x) = c_{1}(x)W_{\beta,\gamma}(|y|^{\sigma_1} v^q)(x)
\end{equation*}
for some double bounded function $c_{1}(x)$. Likewise, similar calculations on the second integral equation will lead to 
\begin{equation*}
v(x) = c_{2}(x)W_{\beta,\gamma}(|y|^{\sigma_{2}} u^p)(x)
\end{equation*}
for some double bounded function $c_{2}(x)$. \medskip

\noindent{(ii)} Choose the fast decay rates: if the stronger condition $p>\frac{(n+\sigma_2)(\gamma-1)}{n-\beta\gamma}$ and $q>\frac{(n+\sigma_1)(\gamma-1)}{n-\beta\gamma}$ hold, then we can take the fast rate $2\theta_1 = 2\theta_2 = \frac{n-\beta\gamma}{\gamma-1}$ in which $2\theta_1 p - \sigma_2 > n$ and $2\theta_2 q - \sigma_1 > n$.
Then 
\begin{align*}
W_{\beta,\gamma}(|y|^{\sigma_1} v^q)(x) = \frac{c_{1}(x)}{(1+|x|^2)^{\frac{n-\beta\gamma}{2(\gamma-1)}}} = c_{1}(x)u(x), \\
W_{\beta,\gamma}(|y|^{\sigma_2} u^p)(x) = \frac{c_{2}(x)}{(1+|x|^2)^{\frac{n-\beta\gamma}{2(\gamma-1)}}} = c_{2}(x)v(x),
\end{align*}
for some double bounded functions $c_{1}(x)$ and $c_{2}(x)$. This follows from direct calculations as before and so we omit the details. This completes the proof of the theorem.
\end{proof}

\section{Slow decay of positive solutions}\label{section3}
In this section, $(u,v)$ is always taken to be a bounded positive solution of system \eqref{Wolff}, and the goal is to prove Theorem \ref{slow theorem}.

\begin{proposition}\label{slow prop 1}
Let $\theta_1 < q_0$ and $\theta_2 < p_0$. Then there does not exist any positive constant $c$ for which
$$\text{ either }\, u(x) \geq c(1+|x|)^{-\theta_1} \,\text{ or }\, v(x) \geq c(1+|x|)^{-\theta_2} \text{ for a.e. } x \in \mathbb{R}^n.$$
\end{proposition}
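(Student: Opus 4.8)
The plan is to argue by contradiction via a bootstrap that improves the decay exponent, exactly paralleling the iteration in the proof of Theorem \ref{Liouville} but run "from below" using the assumed lower bound rather than a crude estimate from the tail of the Wolff potential. Suppose, say, $u(x) \geq c(1+|x|)^{-\theta_1}$ for a.e. $x$ with $\theta_1 < q_0$ (the case with the hypothesis on $v$ is symmetric; one first feeds it through one integral equation to get a matching lower bound on $u$). First I would insert this lower bound into the second equation: for $|x|$ large,
\begin{equation*}
v(x) = c_2(x) W_{\beta,\gamma}(|y|^{\sigma_2}u^p)(x) \geq C \int_{2|x|}^{\infty}\Big( \frac{\int_{B_{t-|x|}(0)\setminus B_{(t-|x|)/2}(0)} |y|^{\sigma_2 - p\theta_1}\,dy}{t^{n-\beta\gamma}}\Big)^{\frac{1}{\gamma-1}}\,\frac{dt}{t} \geq C|x|^{-\eta_1},
\end{equation*}
where $\eta_1 = (p\theta_1 - \sigma_2 - \beta\gamma)/(\gamma-1)$, provided the integral is finite (if the exponent $p\theta_1 - \sigma_2 - \beta\gamma \le 0$ the integral diverges and $v \equiv \infty$, contradicting boundedness). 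Plugging this into the first equation gives $u(x) \geq C|x|^{-\xi_1}$ with $\xi_1 = (q\eta_1 - \sigma_1 - \beta\gamma)/(\gamma-1)$, and iterating produces sequences $\{\xi_k\}, \{\eta_k\}$ obeying the same affine recursion as in \eqref{sequence}, namely $\xi_k = r_0 \xi_{k-1} - A$ with $r_0 = pq/(\gamma-1)^2$ and $A = [\beta\gamma(\gamma-1+q)+(\gamma-1)\sigma_1 + q\sigma_2]/(\gamma-1)^2$, whose fixed point is precisely $q_0$.

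The key point is the sign of $\xi_k - q_0$. Since by \eqref{necessary existence} we are in the regime $pq > (\gamma-1)^2$, i.e. $r_0 > 1$, and since we start with $\xi_0 = \theta_1 < q_0$, the solved recursion $\xi_k - q_0 = r_0^k(\xi_0 - q_0)$ shows $\xi_k \to -\infty$; likewise $\eta_k \to -\infty$. (In the borderline case $pq = (\gamma-1)^2$ one is in the non-existence regime of Theorem \ref{Liouville} anyway, so there is nothing to prove; but for safety one can note that then the recursion is $\xi_k = \xi_0 - kA'$, and $\theta_1 < q_0$ forces $A' > 0$ — since $q_0 = \lim$ would require $A' = 0$ — and again $\xi_k \to -\infty$.) Hence after finitely many steps some exponent in the pair $(p\xi_k - \sigma_2 - \beta\gamma)$ or $(q\eta_k - \sigma_1 - \beta\gamma)$ becomes nonpositive, the corresponding Wolff integral diverges, and we conclude $u(x) \equiv \infty$ or $v(x) \equiv \infty$ on $|x| > R$, contradicting that $(u,v)$ is a bounded solution. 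This is the contradiction that proves the proposition.

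The step I expect to be the main obstacle — or at least the one requiring the most care — is justifying that the iteration can actually "continue" at each finite stage, i.e. that as long as the exponents $p\xi_{k} - \sigma_2 - \beta\gamma$ and $q\eta_{k-1} - \sigma_1 - \beta\gamma$ remain strictly positive, the Wolff integrals converge and genuinely produce the claimed power lower bound with a constant uniform in $|x|$ (the annular region $B_{t-|x|}(0)\setminus B_{(t-|x|)/2}(0)$ is used precisely to keep $|y| \simeq t$ and make the $y$-integral a clean power of $t$, and one must check the weight $|y|^{\sigma_i - (\cdot)}$ is locally integrable near the origin, which holds since $\sigma_i > -\beta\gamma$ and the seed bound only concerns large $|x|$). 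Once that bookkeeping is in place, the divergence at the terminal step and the resulting contradiction are immediate. I would then remark that part (i) of Theorem \ref{slow theorem} is just this proposition together with its mirror image obtained by swapping $(u,p,\sigma_1,\theta_1,q_0)$ with $(v,q,\sigma_2,\theta_2,p_0)$.
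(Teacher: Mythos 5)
Your proposal is correct and follows essentially the same route as the paper's own proof: seed the lower bound into the Wolff iteration over the annulus $B_{t-|x|}(0)\setminus B_{(t-|x|)/2}(0)$, solve the resulting affine recursion (whose fixed point is $q_0$), use $\theta_1 < q_0$ together with $r_0 = pq/(\gamma-1)^2 > 1$ to drive the exponent negative in finitely many steps, and conclude that a Wolff integral diverges, contradicting that $(u,v)$ is a (finite) bounded solution. The paper's proof is identical in structure and even uses the same annular decomposition; your extra observation that the $pq = (\gamma-1)^2$ case is vacuous by Theorem \ref{Liouville} is also consistent with the paper's implicit reliance on \eqref{necessary existence}.
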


\begin{proof}
The proof incorporates the iteration scheme found in the proof of Theorem \ref{Liouville}. On the contrary, assume there exists a positive constant $c$ such that 
$$u(x) \geq c(1+|x|)^{-\theta_1}.$$
Indeed, for large $|x|$ and with $\Omega_{x,t} \doteq B_{t-|x|}(0)\backslash B_{\frac{t-|x|}{2}}(0)$, there holds
\begin{equation*}
v(x) \geq c\int_{2|x|}^{\infty} \Big( \int_{\Omega_{x,t}} \frac{ |y|^{\sigma_2}(1+|y|)^{-\theta_1 p} \,dy}{t^{n-\beta\gamma}} \Big)^{\frac{1}{\gamma-1}} \,\frac{dt}{t} \geq c(1+|x|)^{-a_1},
\end{equation*}
where $b_0 = \theta_1$ and $a_1 = \frac{b_0 p - \beta\gamma - \sigma_2}{\gamma-1}$. Inserting this estimate into the first integral equation yields
\begin{equation*}
u(x) \geq c\int_{2|x|}^{\infty} \Big( \int_{\Omega_{x,t}} \frac{ |y|^{\sigma_1}(1+|y|)^{-a_1 q} \,dy}{t^{n-\beta\gamma}} \Big)^{\frac{1}{\gamma-1}} \,\frac{dt}{t} \geq c(1+|x|)^{-b_1},
\end{equation*}
where $b_1 = \frac{a_1 q - \beta\gamma - \sigma_1}{\gamma-1}$. As before, provided that the rates remain positive, we can repeat this procedure inductively to arrive at
$$ v(x) \geq  c(1+|x|)^{-a_j} \,\text{ and }\, u(x) \geq c(1+|x|)^{-b_j},$$
where
$$ a_{j+1} = \frac{pb_j - \beta\gamma - \sigma_2}{\gamma-1} \,\text{ and }\, b_j = \frac{qa_j - \beta\gamma - \sigma_1}{\gamma-1} \,\text{ for }\, j=1,2,3,\ldots.$$
If we set $r_0 = \frac{pq}{(\gamma-1)^2}$ and $\eta_0 = \beta\gamma(\gamma-1+q) + (\gamma-1)\sigma_1 + \sigma_2 q$, we calculate that
\begin{align*}
b_j = {} & \frac{q a_j - \beta\gamma -\sigma_1}{\gamma-1} = \frac{1}{\gamma-1}\Big\lbrace q\frac{pb_{j-1} - \beta\gamma - \sigma_2}{\gamma-1} - \beta\gamma - \sigma_1 \Big\rbrace \\
= {} & \frac{pqb_{j-1} - \eta_0}{(\gamma-1)^2} 
= \frac{1}{(\gamma-1)^2}\Big\lbrace pq\frac{qa_{j-1} - \beta\gamma-\sigma_1}{\gamma-1} - \eta_0 \Big\rbrace \\
= {} & r_{0}^2 b_{j-2} - \frac{\eta_0}{(\gamma-1)^2} ( 1 + r_0 ) = \ldots = r_{0}^j b_{0} - \frac{\eta_0}{(\gamma-1)^2} \sum_{i=0}^{j-1} r_{0}^{i} \\
= {} & r_{0}^j b_{0} - \eta_0 \frac{ r_{0}^j - 1}{pq - (\gamma-1)^2}
= r_{0}^j (\theta_1 - q_0) + q_0.
\end{align*}
Since $\theta_1 < q_0$, this implies that we can choose a suitably large $j_0$ such that $b_{j_0} < 0$. Therefore,
$$ v(x) \geq c\int_{2|x|}^{\infty} t^{\frac{\beta\gamma + \sigma_2 - p b_{j_{0}} }{\gamma-1}} \,\frac{dt}{t} = \infty, $$
but this is impossible. Likewise, if there exists a positive constant $c$ such that $v(x) \geq c(1+|x|)^{-\theta_2}$, then we can apply the same iteration scheme to deduce a contradiction. This completes the proof of the proposition.
\end{proof}

\begin{proposition}\label{slow prop 2}
Let $\theta_3 > q_0$ and $\theta_4 > p_0$ and $(u,v)$ is not integrable. Then there does not exist any positive constant $C$ for which 
$$\text{ either }\, u(x) \leq C(1+|x|)^{-\theta_3} \,\text{ or }\, v(x) \leq C(1+|x|)^{-\theta_4} \text{ for a.e. } x \in \mathbb{R}^n.$$
\end{proposition}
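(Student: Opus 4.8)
The plan is to argue by contradiction, \emph{reversing} the iteration scheme used in the proof of Proposition~\ref{slow prop 1}: a one-sided polynomial upper bound on one component, fed \emph{once} through the other integral equation, forces both components into the critical Lebesgue spaces, contradicting non-integrability. Suppose, without loss of generality, that there is a constant $C>0$ with $u(x)\le C(1+|x|)^{-\theta_3}$ for a.e.\ $x\in\mathbb{R}^n$, where $\theta_3>q_0$; the case of a bound on $v$ is handled symmetrically. Since a positive solution exists, \eqref{necessary existence} holds, so in particular $pq>(\gamma-1)^2$, $q_0,p_0>0$, and $\max\{q_0,p_0\}<\frac{n-\beta\gamma}{\gamma-1}$.

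First I would observe that the hypothesis \emph{by itself} yields $u\in L^{r_0}(\mathbb{R}^n)$: since $r_0=n/q_0$ and $\theta_3>q_0$ we have $\theta_3 r_0>n$, so $\int_{|x|>1}u^{r_0}\le C\int_{|x|>1}(1+|x|)^{-\theta_3 r_0}\,dx<\infty$, and $\int_{|x|\le 1}u^{r_0}<\infty$ because $u$ is bounded.

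Second, I would bootstrap once through the second equation. As $\sigma_2>-\beta\gamma$, the density $|y|^{\sigma_2}u^p(y)\le C|y|^{\sigma_2}(1+|y|)^{-p\theta_3}$ is locally integrable and decays at infinity like $|y|^{-(p\theta_3-\sigma_2)}$, with $p\theta_3-\sigma_2>pq_0-\sigma_2=(\gamma-1)p_0+\beta\gamma>\beta\gamma$. Invoking the pointwise upper estimates for Wolff potentials of such weighted power-type densities --- exactly the computations carried out in the proof of Theorem~\ref{theorem2} --- one gets, for large $|x|$,
$$v(x)=c_2(x)W_{\beta,\gamma}(|y|^{\sigma_2}u^p)(x)\le C(1+|x|)^{-a_1},$$
where $a_1=\frac{p\theta_3-\sigma_2-\beta\gamma}{\gamma-1}$ if $p\theta_3-\sigma_2<n$ and $a_1=\frac{n-\beta\gamma}{\gamma-1}$ if $p\theta_3-\sigma_2>n$, with an additional $(\ln|x|)^{1/(\gamma-1)}$ factor in the borderline case $p\theta_3-\sigma_2=n$. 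In every case $a_1>p_0$: when $a_1=\frac{p\theta_3-\sigma_2-\beta\gamma}{\gamma-1}$ this is immediate from $\theta_3>q_0$ and the identity $\frac{pq_0-\sigma_2-\beta\gamma}{\gamma-1}=p_0$ (a direct consequence of the definitions of $q_0,p_0$), and when $a_1=\frac{n-\beta\gamma}{\gamma-1}$ it is the strict inequality $p_0<\frac{n-\beta\gamma}{\gamma-1}$ from \eqref{necessary existence}. Since $a_1>p_0=n/s_0$ is strict, the possible logarithm does not interfere, and therefore $v\in L^{s_0}(\mathbb{R}^n)$.

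Combining the two steps shows $(u,v)\in L^{r_0}(\mathbb{R}^n)\times L^{s_0}(\mathbb{R}^n)$ is an integrable solution, the desired contradiction. The case $v(x)\le C(1+|x|)^{-\theta_4}$ with $\theta_4>p_0$ is entirely parallel, using $v\in L^{s_0}$ directly (from $\theta_4>p_0=n/s_0$) and then the identity $\frac{qp_0-\sigma_1-\beta\gamma}{\gamma-1}=q_0$ to bootstrap to $u\in L^{r_0}$. I expect the only point requiring genuine care to be the trichotomy $p\theta_3-\sigma_2\lessgtr n$ in the Wolff potential estimate and the verification that the borderline logarithmic correction is harmless for $L^{s_0}$-membership; the rest is a direct application of the decay estimates for $W_{\beta,\gamma}$ of power-type weights already used in this section.
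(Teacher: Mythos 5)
Your proof is correct, and it is actually more complete than the paper's own argument. The paper's proof of this proposition is short: it shows that $u(x)\le C(1+|x|)^{-\theta_3}$ with $\theta_3>q_0$ directly forces $u\in L^{r_0}(\mathbb{R}^n)$ by radial integration (and symmetrically the $v$-bound forces $v\in L^{s_0}$), and then it simply declares a contradiction with non-integrability. But since the paper itself defines ``not integrable'' as the \emph{disjunction} ``$u\notin L^{r_0}$ or $v\notin L^{s_0}$,'' establishing $u\in L^{r_0}$ alone does not yet contradict it --- you would also need $v\in L^{s_0}$, and the paper never addresses this. Your bootstrap step supplies exactly the missing piece: feeding the pointwise bound on $u$ once through the second integral equation and invoking the Wolff-potential decay estimates (the same computation carried out in the proof of Theorem~\ref{theorem2}) yields $v(x)\le C(1+|x|)^{-a_1}$ with $a_1>p_0$ in every subcase, and hence $v\in L^{s_0}$. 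The key identity $\frac{pq_0-\sigma_2-\beta\gamma}{\gamma-1}=p_0$ that makes this work (and its symmetric counterpart $\frac{qp_0-\sigma_1-\beta\gamma}{\gamma-1}=q_0$) is indeed a direct algebraic consequence of the definitions of $q_0,p_0$, and the strict inequality $p_0<\frac{n-\beta\gamma}{\gamma-1}$ from \eqref{necessary existence} handles the remaining subcases and makes the borderline logarithm harmless, just as you note. So your version genuinely repairs a logical gap in the published proof (or, at minimum, makes explicit a nontrivial step the paper elides); what it costs is the extra Wolff-potential computation, but that machinery is already present in the paper and is the natural tool for the job.
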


\begin{proof}
Assume there exists a $C>0$ such that $u(x) \leq C(1+|x|)^{-\theta_3}$. Then $n-(n/q_0) \theta_3 < 0$ and for a suitable choice of $R> 0$, we obtain
\begin{align*}
\int_{\mathbb{R}^n} u(x)^{r_0} \,dx = {} & \int_{\mathbb{R}^n} u(x)^{\frac{n}{q_0} } \,dx = \int_{B_{R}(0)} u(x)^{\frac{n}{q_0} } \,dx + \int_{\mathbb{R}^n \backslash B_{R}(0)} u(x)^{\frac{n}{q_0}} \,dx \\
\leq {} & C_{1} + C_{2}\int_{R}^{\infty} t^{n-\frac{n}{q_0} \theta_3} \,\frac{dt}{t} < \infty.
\end{align*}
Similarly, if there exists a $C>0$ such that $v(x) \leq C(1+|x|)^{-\theta_4}$, then $n-(n/p_0) \theta_4 < 0$ and we can show $v \in L^{s_0}(\mathbb{R}^n)$. In any case, we arrive at a contradiction with $(u,v)$ being not integrable.
\end{proof}

\begin{proposition}\label{slow prop 3}
If $(u,v)$ is not integrable but is a decaying solution, then $(u,v)$ necessarily decays with the slow rates as $|x|\rightarrow \infty$.
\end{proposition}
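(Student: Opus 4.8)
The plan is to combine Proposition~\ref{slow prop 1} and Proposition~\ref{slow prop 2} with the assumption that $(u,v)$ is a decaying solution. By definition of a decaying solution, there exist positive rates $\theta_1,\theta_2$ such that $u(x) \simeq |x|^{-\theta_1}$ and $v(x) \simeq |x|^{-\theta_2}$ as $|x|\to\infty$; in particular, there are positive constants $c,C$ with $c(1+|x|)^{-\theta_1} \leq u(x) \leq C(1+|x|)^{-\theta_1}$ and similarly for $v$ (for $|x|$ large, hence for all $x\in\mathbb{R}^n$ after adjusting constants, using positivity and local boundedness). The goal is to show $\theta_1 = q_0$ and $\theta_2 = p_0$.

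First I would apply Proposition~\ref{slow prop 1}: since $u(x) \geq c(1+|x|)^{-\theta_1}$ holds, the proposition forbids $\theta_1 < q_0$; likewise the lower bound on $v$ forbids $\theta_2 < p_0$. Hence $\theta_1 \geq q_0$ and $\theta_2 \geq p_0$. Next I would apply Proposition~\ref{slow prop 2}, which is where the non-integrability hypothesis enters: since $(u,v)$ is assumed not integrable and $u(x) \leq C(1+|x|)^{-\theta_1}$, the proposition forbids $\theta_1 > q_0$, and similarly the upper bound on $v$ together with non-integrability forbids $\theta_2 > p_0$. Combining the two sets of inequalities gives $\theta_1 = q_0$ and $\theta_2 = p_0$, which is precisely the statement that $(u,v)$ decays with the slow rates, $u(x)\simeq |x|^{-q_0}$ and $v(x)\simeq |x|^{-p_0}$.

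There is essentially no serious obstacle here once Propositions~\ref{slow prop 1} and~\ref{slow prop 2} are in hand — the argument is a short squeezing argument. The one point requiring a little care is making sure the hypotheses of the two propositions are genuinely met: Proposition~\ref{slow prop 1} needs a \emph{lower} bound of the form $u(x)\geq c(1+|x|)^{-\theta_1}$ valid for a.e.\ $x\in\mathbb{R}^n$ (not just for large $|x|$), and Proposition~\ref{slow prop 2} needs an \emph{upper} bound valid for a.e.\ $x$; both follow from $u(x)\simeq|x|^{-\theta_1}$ as $|x|\to\infty$ because $u$ is a bounded positive solution, so it is bounded above everywhere and bounded below by a positive constant on any compact set (the Wolff potential of a nontrivial nonnegative integrand is strictly positive), letting one absorb the behavior on $B_R(0)$ into the constants. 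I would state this reduction in one sentence and then invoke the two propositions. Note also that one does not even need both coordinates: forbidding $\theta_1<q_0$ and $\theta_1>q_0$ already pins down $u$, and symmetrically for $v$, so the conclusion follows coordinatewise.

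\begin{proof}
Since $(u,v)$ is a decaying solution, there exist positive rates $\theta_1,\theta_2$ and a positive constant $c$ such that
$$ c^{-1}(1+|x|)^{-\theta_1} \leq u(x) \leq c(1+|x|)^{-\theta_1} \,\text{ and }\, c^{-1}(1+|x|)^{-\theta_2} \leq v(x) \leq c(1+|x|)^{-\theta_2} $$
for a.e.\ $x\in\mathbb{R}^n$; indeed, these bounds hold for $|x|$ large by definition, and since $u$ and $v$ are positive and bounded, after enlarging $c$ they hold for all $x$. By Proposition~\ref{slow prop 1} applied to the lower bounds, we cannot have $\theta_1 < q_0$ nor $\theta_2 < p_0$; hence $\theta_1 \geq q_0$ and $\theta_2 \geq p_0$. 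On the other hand, since $(u,v)$ is not integrable, Proposition~\ref{slow prop 2} applied to the upper bounds shows that we cannot have $\theta_1 > q_0$ nor $\theta_2 > p_0$; hence $\theta_1 \leq q_0$ and $\theta_2 \leq p_0$. Therefore $\theta_1 = q_0$ and $\theta_2 = p_0$, that is,
$$ u(x) \simeq |x|^{-q_0} \,\text{ and }\, v(x) \simeq |x|^{-p_0} \,\text{ as }\, |x| \rightarrow \infty. $$
This completes the proof.
\end{proof}
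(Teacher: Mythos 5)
Your proof is correct and takes exactly the same route as the paper, whose proof of this proposition is the single sentence ``This follows immediately from Propositions~\ref{slow prop 1} and \ref{slow prop 2}.'' You have simply spelled out the squeeze argument (lower bounds plus Proposition~\ref{slow prop 1} force $\theta_1\ge q_0$, $\theta_2\ge p_0$; upper bounds plus non-integrability plus Proposition~\ref{slow prop 2} force the reverse inequalities), including the small but worthwhile remark that the asymptotic two-sided bounds extend to a.e.\ $x\in\mathbb{R}^n$ after adjusting constants because the solution is bounded and strictly positive.
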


\begin{proof}
This follows immediately from Propositions \ref{slow prop 1} and \ref{slow prop 2}.
\end{proof}

\begin{proof}[Proof of Theorem \ref{slow theorem}]
The theorem follows from Propositions \ref{slow prop 1}--\ref{slow prop 3} .
\end{proof}

\section{Fast decay of positive solutions}\label{section4}
This section contains the proof of Theorem \ref{improved fast theorem} and throughout the section, we assume the same conditions as those stated in the theorem. Moreover, $(u,v)$ is assumed to be an optimal integrable solution of \eqref{Wolff} unless stated otherwise.

\begin{proposition}\label{fast lower bound}
For suitably large $|x|$, there exists a constant $c>0$ such that
$$ u(x),v(x) \geq c|x|^{-\frac{n-\beta\gamma}{\gamma - 1}}.$$
\end{proposition}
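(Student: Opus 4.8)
The plan is to establish the universal lower bound $u(x), v(x) \geq c|x|^{-(n-\beta\gamma)/(\gamma-1)}$ for large $|x|$ by exploiting only the local mass of the solution near the origin, exactly as in the opening paragraph of the proof of Theorem \ref{Liouville}. First I would fix $R>0$ large enough that both $\int_{B_R(0)} |y|^{\sigma_1} v^q(y)\,dy$ and $\int_{B_R(0)} |y|^{\sigma_2} u^p(y)\,dy$ are finite and strictly positive; this is legitimate since $u,v$ are positive, locally integrable, and (being an optimal integrable solution) not identically zero, while the weights $|y|^{\sigma_i}$ with $\sigma_i > -\beta\gamma > -n$ are locally integrable. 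Then for $|x| > R$ and for $t > |x| + R$ we have $B_R(0) \subset B_t(x)$, so
\begin{equation*}
u(x) = c_1(x) W_{\beta,\gamma}(|y|^{\sigma_1} v^q)(x) \geq C \int_{|x|+R}^{\infty} \Big( \frac{\int_{B_R(0)} |y|^{\sigma_1} v^q(y)\,dy}{t^{n-\beta\gamma}} \Big)^{\frac{1}{\gamma-1}} \frac{dt}{t} \geq C \int_{|x|+R}^{\infty} t^{-\frac{n-\beta\gamma}{\gamma-1}} \frac{dt}{t},
\end{equation*}
using the double boundedness of $c_1$ and the positivity of the local mass. Since $\beta\gamma < n$ the exponent $-(n-\beta\gamma)/(\gamma-1)$ is negative, so the integral converges and evaluates to a constant multiple of $(|x|+R)^{-(n-\beta\gamma)/(\gamma-1)}$, which for $|x| > R$ is bounded below by $c|x|^{-(n-\beta\gamma)/(\gamma-1)}$. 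The identical argument applied to the second integral equation, using the local mass $\int_{B_R(0)} |y|^{\sigma_2} u^p(y)\,dy$, gives the same lower bound for $v(x)$.

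The only point requiring a little care is the claim that the local masses are strictly positive, i.e.\ that neither $u$ nor $v$ vanishes a.e.\ on every bounded set; this follows because an optimal integrable solution is by hypothesis a genuine positive solution, and if, say, $v \equiv 0$ a.e.\ on all of $\mathbb{R}^n$ then the first equation would force $u \equiv 0$, contradicting positivity. No decay or boundedness information is actually needed for this proposition — only positivity and local integrability — so this is the easiest of the preparatory results, and I do not expect any genuine obstacle; the work is simply bookkeeping with the radial integral and the double bounded coefficients.
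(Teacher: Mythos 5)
Your proof is correct and is essentially identical to the paper's: both truncate the Wolff potential to $t$ beyond $|x|$ plus a fixed radius, bound the measure of $B_t(x)$ from below by the positive local mass on a fixed ball near the origin, and integrate the resulting power of $t$. The paper simply uses the ball $B_1(0)$ where you use $B_R(0)$, and leaves implicit the routine remark that the local masses are positive.
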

\begin{proof}
For suitably large $|x|$, we have that
\begin{equation*}
u(x) \geq c \int_{1+|x|}^{\infty} \Big( \frac{\int_{B_{1}(0)} |y|^{\sigma_{1}} v^{q}(y) \,dy}{t^{n-\beta\gamma}} \Big)^{\frac{1}{\gamma - 1}} \frac{dt}{t} \geq c \int_{1+|x|}^{\infty} t^{\frac{\beta\gamma - n}{\gamma - 1} } \frac{dt}{t} \geq c|x|^{-\frac{n-\beta\gamma}{\gamma - 1}}.
\end{equation*}
The corresponding estimate for $v(x)$ follows similarly.
\end{proof}

\begin{proposition}\label{fast prop 1}
There holds $ u(x) \simeq |x|^{-\frac{n-\beta\gamma}{\gamma - 1} }.$
\end{proposition}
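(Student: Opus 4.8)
The target is the two-sided bound $u(x) \simeq |x|^{-\frac{n-\beta\gamma}{\gamma-1}}$ for an optimal integrable bounded decaying solution. The lower bound $u(x) \geq c|x|^{-\frac{n-\beta\gamma}{\gamma-1}}$ is already in hand from Proposition \ref{fast lower bound}, so the entire work is the matching upper bound $u(x) \leq C|x|^{-\frac{n-\beta\gamma}{\gamma-1}}$ for large $|x|$. The natural strategy is to split the defining Wolff integral $u(x) = c_1(x) W_{\beta,\gamma}(|y|^{\sigma_1}v^q)(x)$ at scale $t = |x|/2$ into a near piece $\int_0^{|x|/2}$ and a far piece $\int_{|x|/2}^\infty$, and estimate each. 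Write $f = |y|^{\sigma_1}v^q$ and $m(t) = \int_{B_t(x)} f\,dy$.

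\medskip

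\textbf{Far piece.} For $t \geq |x|/2$ we have $B_t(x) \subset B_{2t}(0)$, so $m(t) \leq \int_{B_{2t}(0)} |y|^{\sigma_1}v^q\,dy$. Since $(u,v)$ is an optimal integrable solution, $v \in L^{s_0}$ with $s_0 = n/p_0$; combined with the fact that $\sigma_1$ lies in the admissible range and the slow/fast machinery, one gets that $\int_{B_{2t}(0)}|y|^{\sigma_1}v^q\,dy$ is controlled — either it is a convergent integral over $\mathbb{R}^n$ (bounded by a constant), or it grows like a controlled power of $t$. In the generic case where $p(\frac{n-\beta\gamma}{\gamma-1}) - \sigma_2 > n$, the total mass $\int_{\mathbb{R}^n}|y|^{\sigma_1}v^q\,dy < \infty$, so $m(t) \leq C$ and
\begin{equation*}
\int_{|x|/2}^{\infty} \Big(\frac{m(t)}{t^{n-\beta\gamma}}\Big)^{\frac{1}{\gamma-1}}\frac{dt}{t} \leq C \int_{|x|/2}^\infty t^{-\frac{n-\beta\gamma}{\gamma-1}}\frac{dt}{t} = C|x|^{-\frac{n-\beta\gamma}{\gamma-1}},
\end{equation*}
which is exactly the desired rate. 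When the integrability of $|y|^{\sigma_1}v^q$ fails at infinity one instead uses the decay hypothesis $v(x) \simeq |x|^{-\theta_2}$ (the decaying assumption) to bound $m(t)$ by a power of $t$ small enough that the same integral still converges to a multiple of $|x|^{-\frac{n-\beta\gamma}{\gamma-1}}$; the exponent bookkeeping is routine once one checks the relevant inequality $q\theta_2 - \sigma_1 > \beta\gamma$ forced by optimal integrability.

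\medskip

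\textbf{Near piece.} For $0 \leq t \leq |x|/2$ and $y \in B_t(x)$ we have $|y| \simeq |x|$, so $f(y) \leq C|x|^{\sigma_1} v^q(y)$ with $v$ controlled on this annular region by its decay bound (or simply boundedness plus the decay rate). Then $m(t) \leq C|x|^{\sigma_1}\|v\|^q_{L^\infty(\text{near region})}|B_t(x)| \leq C|x|^{\sigma_1 - q\theta_2} t^n$ using $v \simeq |x|^{-\theta_2}$ on $B_{|x|/2}(x)$. Hence
\begin{equation*}
\int_0^{|x|/2}\Big(\frac{m(t)}{t^{n-\beta\gamma}}\Big)^{\frac{1}{\gamma-1}}\frac{dt}{t} \leq C|x|^{\frac{\sigma_1 - q\theta_2}{\gamma-1}}\int_0^{|x|/2} t^{\frac{\beta\gamma}{\gamma-1}}\frac{dt}{t} \leq C|x|^{\frac{\sigma_1 - q\theta_2 + \beta\gamma}{\gamma-1}}.
\end{equation*}
For this to be $\leq C|x|^{-\frac{n-\beta\gamma}{\gamma-1}}$ one needs $\sigma_1 - q\theta_2 + \beta\gamma \leq -(n-\beta\gamma)$, i.e. $q\theta_2 \geq n + \sigma_1 - 2\beta\gamma$... — and here the decay rate $\theta_2$ must be pinned down. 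This is the crux: a priori $\theta_2$ is only known to be positive. The fix is to first prove the companion statement $v(x) \simeq |x|^{-\theta_2}$ with $\theta_2$ at least as large as the fast rate (or to run the bootstrap jointly): starting from the crude lower/upper information on $v$, feed it through the near/far splitting to improve the decay exponent of $v$, then of $u$, and iterate. Each iteration raises the exponent geometrically by a factor $pq/(\gamma-1)^2 > 1$ (by \eqref{necessary existence}), so after finitely many steps the exponent reaches the ceiling imposed by the convergence of the near integral, which is precisely $\frac{n-\beta\gamma}{\gamma-1}$ for $u$.

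\medskip

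\textbf{Main obstacle.} The delicate point is not the two integral estimates themselves — those are the same computations as in the proof of Theorem \ref{theorem2} — but rather closing the bootstrap: one must verify that the iteration of decay exponents, driven jointly through both equations, actually terminates at the fast rate rather than overshooting (which would contradict the lower bound) or stalling below it. This requires using the optimal integrability hypothesis in an essential way: it is optimal integrability (not mere integrability) that guarantees the far-piece mass $m(t)$ is bounded, which in turn caps the achievable decay of $u$ at exactly $\frac{n-\beta\gamma}{\gamma-1}$ and prevents the bootstrap from running away. I would organize the argument as: (1) record $|y|\simeq|x|$ on $B_{|x|/2}(x)$ and the resulting near-piece bound in terms of the current decay exponent of $v$; (2) use optimal integrability to bound the far piece by $C|x|^{-\frac{n-\beta\gamma}{\gamma-1}}$; (3) run the iteration on exponents using $pq > (\gamma-1)^2$ to conclude the decay exponent of $u$ reaches $\frac{n-\beta\gamma}{\gamma-1}$ in finitely many steps; (4) combine with Proposition \ref{fast lower bound}.
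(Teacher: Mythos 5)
Your plan overshoots the mark by a wide margin, and in the process misidentifies what actually needs to be shown. The hypothesis that $(u,v)$ is a \emph{decaying} solution already gives you, by definition, a single rate $\theta_1$ with $u(x)\simeq|x|^{-\theta_1}$ (two-sided, for large $|x|$). The paper does not derive an upper bound on $u$ from the Wolff representation at all; it simply pins down the number $\theta_1$ by a pinching argument. One direction: if $\theta_1<\frac{n-\beta\gamma}{\gamma-1}$, choose $\epsilon>0$ with $\theta_1\leq\frac{n-\beta\gamma}{\gamma-1+\epsilon}$ and set $r=\frac{n(\gamma-1+\epsilon)}{n-\beta\gamma}$; then $r$ is in the optimal-integrability range but $\int_{|x|>R}|x|^{-r\theta_1}\,dx\geq\int_R^\infty t^{n-r\theta_1}\,\frac{dt}{t}=\infty$ since $r\theta_1\leq n$, contradicting $u\in L^r$. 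The other direction: Proposition~\ref{fast lower bound} gives $u(x)\geq c|x|^{-\frac{n-\beta\gamma}{\gamma-1}}$, and combined with the upper half of $u(x)\simeq|x|^{-\theta_1}$ this forces $\theta_1\leq\frac{n-\beta\gamma}{\gamma-1}$. Done. Four lines.

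Your near/far splitting of the Wolff integral followed by a joint bootstrap on $(\theta_1,\theta_2)$ is a genuinely different route, and a much harder one: (a) in the cases $p(\frac{n-\beta\gamma}{\gamma-1})-\sigma_2\leq n$ the far-piece mass $\int|y|^{\sigma_1}v^q\,dy$ is \emph{not} finite, which you acknowledge but do not resolve; (b) the claim that the bootstrap ``terminates at the fast rate'' is unsubstantiated — you have not shown why the exponent iteration stops exactly there rather than stalling or overshooting, and the lower bound from Proposition~\ref{fast lower bound} controls the endpoint from above only if you feed it back in correctly; (c) your exponent bookkeeping in the near-piece is off (the constraint is $q\theta_2\geq n+\sigma_1$, not $n+\sigma_1-2\beta\gamma$); and (d) most importantly, the entire machinery is unnecessary once you notice that ``decaying'' hands you an exact rate for free. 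The integral splitting you describe is essentially the tool the paper uses later for Propositions~\ref{fast prop 3} and~\ref{fast prop 4}, where one really does need to estimate $v$ from the Wolff integral using the already-established rate of $u$; it is not the right tool for identifying $\theta_1$ here.
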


\begin{proof}
By the hypotheses, there is some rate $\theta_1$ for which  $u(x) \simeq |x|^{-\theta_1}$. First, we claim that $\theta_1 \geq \frac{n-\beta\gamma}{\gamma - 1}$; otherwise, we can find $\epsilon > 0$ so that $\theta_1 \leq \frac{n-\beta\gamma}{\gamma - 1 + \epsilon}$. If we set $r = \frac{n(\gamma - 1 + \epsilon)}{n - \beta\gamma}$, then for a sufficiently large $R> 0$,
\begin{equation*}
\|u\|_{L^{r}(\mathbb{R}^n)}^r \geq c\int_{\mathbb{R}^n \backslash B_{R}(0)} |x|^{-r \theta_1} \,dx \geq c\int_{R}^{\infty} t^{n - \theta_{1}\frac{n(\gamma - 1 + \epsilon)}{n-\beta\gamma}} \frac{dt}{t} = \infty.
\end{equation*}
This is impossible since \eqref{optimal integrability} ensures that $u \in L^{r}(\mathbb{R}^n)$, and this proves the claim. Then Proposition \ref{fast lower bound} further implies that $$\theta_1 = \frac{n-\beta\gamma}{\gamma - 1}.$$
\end{proof}

\begin{proposition}\label{fast prop 2}
If $p(\frac{n-\beta\gamma}{\gamma - 1}) - \sigma_2 > n$, then $ v(x) \simeq |x|^{-\frac{n-\beta\gamma}{\gamma - 1} }.$
\end{proposition}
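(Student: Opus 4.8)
The plan is to combine the lower bound from Proposition \ref{fast lower bound} with a matching upper bound, and the hypothesis $p\big(\frac{n-\beta\gamma}{\gamma-1}\big) - \sigma_2 > n$ is precisely the integrability threshold that makes this work. Write $a_0 = \frac{n-\beta\gamma}{\gamma-1}$. Since $(u,v)$ is bounded and, by Proposition \ref{fast prop 1}, $u(x) \simeq |x|^{-a_0}$, one has $u^p(y) \leq C(1+|y|)^{-pa_0}$, and hence $|y|^{\sigma_2}u^p(y) \leq C|y|^{\sigma_2}(1+|y|)^{-pa_0}$. The assumption $pa_0 - \sigma_2 > n$ forces the right-hand side to decay strictly faster than $|y|^{-n}$ at infinity, while near the origin the standing assumption $\sigma_2 > -\beta\gamma > -n$ guarantees local integrability. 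Therefore $f := |y|^{\sigma_2}u^p \in L^1(\mathbb{R}^n)$.

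Next I would estimate $v(x) = c_2(x)W_{\beta,\gamma}(f)(x) \leq C W_{\beta,\gamma}(f)(x)$ by splitting the $t$-integral at $t = |x|/2$. For the outer range $t \geq |x|/2$ use $\int_{B_t(x)} f \,dy \leq \|f\|_{L^1(\mathbb{R}^n)}$, so that part contributes at most $C\int_{|x|/2}^{\infty} t^{-\frac{n-\beta\gamma}{\gamma-1}}\,\frac{dt}{t} = C|x|^{-a_0}$, where convergence uses $a_0 > 0$. For the inner range $0 \leq t \leq |x|/2$ and $|x|$ large, every $y \in B_t(x)$ satisfies $|x|/2 \leq |y| \leq 3|x|/2$, whence $f(y) \leq C|x|^{\sigma_2 - pa_0}$ and $\int_{B_t(x)} f\,dy \leq C|x|^{\sigma_2 - pa_0} t^n$; the inner contribution is then bounded by
\[
C|x|^{\frac{\sigma_2 - pa_0}{\gamma-1}} \int_0^{|x|/2} t^{\frac{\beta\gamma}{\gamma-1}}\,\frac{dt}{t} = C|x|^{\frac{\sigma_2 - pa_0 + \beta\gamma}{\gamma-1}} = C|x|^{-a_0 - \frac{pa_0 - \sigma_2 - n}{\gamma-1}},
\]
which is $\leq C|x|^{-a_0}$ precisely because $pa_0 - \sigma_2 - n > 0$. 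Combining the two ranges gives $v(x) \leq C|x|^{-a_0}$ for all large $|x|$, and together with Proposition \ref{fast lower bound} this yields $v(x) \simeq |x|^{-\frac{n-\beta\gamma}{\gamma-1}}$.

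The main (and essentially only) point requiring care is the exponent bookkeeping: verifying that $pa_0 - \sigma_2 > n$ is exactly the condition under which the near-diagonal part of the Wolff potential decays at the fast rate $|x|^{-a_0}$ rather than at a strictly slower rate; this is the same dichotomy responsible for the three cases in Theorem \ref{fast theorem}, and in the borderline and sub-threshold regimes the inner integral produces the logarithmic factor and the slower rate $\frac{p a_0 - (\beta\gamma + \sigma_2)}{\gamma-1}$, respectively. Beyond this, there is no analytic subtlety other than the $L^1(\mathbb{R}^n)$ membership of $f$ established in the first step.
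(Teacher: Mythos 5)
Your proof is correct, and it takes a genuinely different route from the paper's. The paper handles this case the same way it handles Proposition~\ref{fast prop 1}: starting from the hypothesis that $v$ decays at some rate $\theta_2$, it argues by contradiction that any $\theta_2 < \frac{n-\beta\gamma}{\gamma-1}$ would force $\|v\|_{L^s} = \infty$ for some admissible $s$ in \eqref{optimal integrability}, so the $L^s$-membership of $v$ itself pins down the lower bound $\theta_2 \geq \frac{n-\beta\gamma}{\gamma-1}$, and Proposition~\ref{fast lower bound} supplies the matching upper bound. You instead feed the already-established rate $u(x) \simeq |x|^{-a_0}$ from Proposition~\ref{fast prop 1} back into the second integral equation, observe that the hypothesis $pa_0 - \sigma_2 > n$ makes $|y|^{\sigma_2}u^p$ globally $L^1$, and then estimate $W_{\beta,\gamma}(|y|^{\sigma_2}u^p)(x)$ directly by splitting the $t$-integral at $|x|/2$: the far range contributes $C|x|^{-a_0}$ via the $L^1$ bound, and the near-diagonal range contributes the strictly smaller power $|x|^{-a_0 - (pa_0 - \sigma_2 - n)/(\gamma-1)}$. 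Your exponent bookkeeping is right, and your treatment of local integrability ($\sigma_2 > -\beta\gamma > -n$) is correct.

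The trade-off: the paper's argument does not touch the integral equation for $v$ and needs only the $L^s$ integrability plus the assumed decay rate of $v$; your argument does not use the assumed decay rate of $v$ at all, only that of $u$, and it extracts the upper bound constructively from the Wolff potential. Your version is more in the spirit of the paper's proofs of Propositions~\ref{fast prop 3} and~\ref{fast prop 4} (which also estimate the Wolff potential directly, with the split at $\lambda|x|$), so it unifies all three cases of $v$'s decay under a single potential-estimate mechanism and makes transparent where the threshold $pa_0 - \sigma_2 = n$ and the logarithmic correction come from.
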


\begin{proof}
As before, there is some positive rate $\theta_2$ such that $v(x) \simeq |x|^{-\theta_2}$. By the integrability of $v$, we claim that
$$\theta_2 \geq min\Big\lbrace \frac{n-\beta\gamma}{\gamma - 1},\, \frac{p(\frac{n-\beta\gamma}{\gamma - 1}) - (\beta\gamma + \sigma_2)}{\gamma - 1} \Big\rbrace.$$
However, notice that 
$$min\Big\lbrace \frac{n-\beta\gamma}{\gamma - 1},\, \frac{p(\frac{n-\beta\gamma}{\gamma - 1}) - (\beta\gamma + \sigma_2)}{\gamma - 1} \Big\rbrace =  \frac{n-\beta\gamma}{\gamma - 1},$$
since $p(\frac{n-\beta\gamma}{\gamma - 1}) - \sigma_2 > n$. Assume the contrary. Then we can find $\epsilon > 0$ such that 
$$\theta_2 \leq \frac{n-\beta\gamma}{\gamma  - 1 + \epsilon} \,\text{ and set }\, s = \frac{n(\gamma  - 1 + \epsilon)}{n-\beta\gamma}.$$
We can choose $R>0$ suitably large so that
\begin{equation*}
\|v\|_{L^{s}(\mathbb{R}^n)}^s \geq c\int_{\mathbb{R}^n \backslash B_{R}(0)} |x|^{-s \theta_2} \,dx \geq c\int_{R}^{\infty} t^{n - \theta_{2}\frac{n(\gamma - 1 + \epsilon)}{n-\beta\gamma}} \frac{dt}{t} = \infty,
\end{equation*}
but this is impossible since \eqref{optimal integrability} ensures that $v \in L^{s}(\mathbb{R}^n)$. This proves the claim and by combining this with Proposition \ref{fast lower bound}, we obtain that 
$$\theta_2 = \frac{n-\beta\gamma}{\gamma - 1}. $$
\end{proof}

\begin{proposition}\label{fast prop 3}
If $p(\frac{n-\beta\gamma}{\gamma - 1}) - \sigma_2 = n$, then $ v(x) \simeq |x|^{-\frac{n-\beta\gamma}{\gamma - 1} }(\ln |x|)^{\frac{1}{\gamma - 1}}.$
\end{proposition}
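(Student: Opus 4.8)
The plan is to establish matching upper and lower bounds of the form $v(x) \simeq |x|^{-\frac{n-\beta\gamma}{\gamma-1}}(\ln|x|)^{\frac{1}{\gamma-1}}$, exploiting the fact that the exponent condition $p\big(\frac{n-\beta\gamma}{\gamma-1}\big) - \sigma_2 = n$ is exactly the borderline case where the relevant radial integral diverges logarithmically. First I would invoke Proposition \ref{fast prop 1}, which already gives $u(x) \simeq |x|^{-\frac{n-\beta\gamma}{\gamma-1}}$ with no extra hypothesis; hence $|y|^{\sigma_2} u^p(y) \simeq |y|^{\sigma_2 - \frac{(n-\beta\gamma)p}{\gamma-1}} = |y|^{-n}$ for large $|y|$, where the last equality is precisely the endpoint condition. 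Then I would feed this into the second Wolff equation $v(x) = c_2(x) W_{\beta,\gamma}(|y|^{\sigma_2} u^p)(x)$ and estimate the inner mass $\mu(B_t(x)) = \int_{B_t(x)} |y|^{\sigma_2} u^p(y)\,dy$. Splitting the radial integral $\int_0^\infty (\mu(B_t(x))/t^{n-\beta\gamma})^{1/(\gamma-1)}\,dt/t$ at $t \sim |x|$: for $t \gtrsim |x|$ the mass behaves like $\mu(B_t(x)) \simeq \int_1^{t} r^{-n} r^{n}\,dr/r \simeq \ln t$ (up to bounded additive terms from the ball around the origin and near-boundary contributions, which I would need to control using that $u$ is bounded and the weight $|y|^{\sigma_2}$ is locally integrable since $\sigma_2 > -\beta\gamma > -n$). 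This produces the dominant contribution
\begin{equation*}
v(x) \gtrsim \int_{2|x|}^{\infty} \Big(\frac{\ln t}{t^{n-\beta\gamma}}\Big)^{\frac{1}{\gamma-1}}\,\frac{dt}{t} \simeq (\ln|x|)^{\frac{1}{\gamma-1}} |x|^{-\frac{n-\beta\gamma}{\gamma-1}},
\end{equation*}
after pulling $(\ln t)^{1/(\gamma-1)}$ out at the scale $t \simeq |x|$ and integrating the remaining power.

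For the matching upper bound I would again split at $t \sim |x|$. For $t \lesssim |x|$, on $B_t(x)$ with $|x|$ large we have $|y| \simeq |x|$, so $\mu(B_t(x)) \lesssim |x|^{-n} t^n$ and the contribution is $\lesssim |x|^{-n/(\gamma-1)} \int_0^{|x|} t^{(n-(n-\beta\gamma))/(\gamma-1)}\,dt/t \simeq |x|^{-n/(\gamma-1)} |x|^{\beta\gamma/(\gamma-1)} = |x|^{-(n-\beta\gamma)/(\gamma-1)}$, which is lower order than the claimed bound (no log), hence harmless. For $t \gtrsim |x|$, the bound $\mu(B_t(x)) \lesssim \int_{B_{t+|x|}(0)} |y|^{\sigma_2} u^p(y)\,dy \lesssim C + \ln(t+|x|) \lesssim \ln t$ for $t$ large, and then the same radial computation as in the lower bound gives $\lesssim (\ln|x|)^{1/(\gamma-1)} |x|^{-(n-\beta\gamma)/(\gamma-1)}$. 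Combining the two regimes yields the upper bound; combined with the lower bound this completes the proof.

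The main obstacle I anticipate is not the leading-order asymptotics but the careful bookkeeping of the \emph{error terms} in estimating $\mu(B_t(x))$: one must separate the genuinely logarithmic tail (coming from the region $1 \lesssim |y| \lesssim t$ where $|y|^{\sigma_2} u^p(y) \simeq |y|^{-n}$) from (i) the bounded contribution of the fixed compact region near the origin where $u$ is merely bounded and the weight may be singular but integrable, and (ii) the annular region $|y| \sim t \pm |x|$ near the boundary of the ball, which could a priori contribute at the same order. Handling (ii) requires using the two-sided bound on $u$ together with a crude volume estimate for the annulus $B_{t}(x)\setminus B_{t-|x|}(0)$-type regions; since that annulus has width comparable to $|x| \ll t$, its contribution is a lower-order perturbation of $\ln t$. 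Once these estimates are organized — essentially mirroring the computations already carried out in the proofs of Theorem \ref{theorem2} and Proposition \ref{fast lower bound}, but tracking the logarithmic factor — the radial integrations are routine, since $\int_{2|x|}^\infty (\ln t)^{1/(\gamma-1)} t^{-(n-\beta\gamma)/(\gamma-1)}\,dt/t$ is dominated by its behavior at the lower endpoint $t \simeq |x|$, giving exactly the stated rate.
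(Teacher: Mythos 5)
Your proposal is correct and follows essentially the same route as the paper's proof: split the radial Wolff integral at $t\sim|x|$, show the small-$t$ portion contributes only $|x|^{-(n-\beta\gamma)/(\gamma-1)}$ without the log, recognize that for $t\gtrsim|x|$ the mass $\mu(B_t(x))$ behaves like $\ln t$ because $|y|^{\sigma_2}u^p(y)\simeq|y|^{-n}$ at the endpoint, and close with the asymptotic $\int_{R}^{\infty}(\ln t)^{1/(\gamma-1)}t^{-(n-\beta\gamma)/(\gamma-1)}\,dt/t\simeq(\ln R)^{1/(\gamma-1)}R^{-(n-\beta\gamma)/(\gamma-1)}$. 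The paper packages this last step as the precise limit identity \eqref{lambda limit} and uses a $\lambda$-scaling to control the inclusions $B_{t-|x|}(0)\subset B_t(x)\subset B_{t+|x|}(0)$, but the underlying computation matches yours.
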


\begin{proof}
We shall make use of the following identity which follows from elementary arguments from calculus (see \cite{Lei2011}). For $\lambda > 0$,
\begin{equation}\label{lambda limit}
\lim_{|x|\rightarrow \infty} \frac{|x|^{\frac{n-\beta\gamma}{\gamma - 1} } }{ (\ln \lambda |x|)^{\frac{1}{\gamma - 1}} } \int_{\lambda |x|}^{\infty} \Big( \frac{\ln t}{t^{n-\beta\gamma}} \Big)^{\frac{1}{\gamma - 1}} \frac{dt}{t} = \frac{\gamma - 1}{n-\beta\gamma}\lambda^{-\frac{n-\beta\gamma}{\gamma - 1}}.
\end{equation}
For $\lambda \in (1/2, 1)$, we write
$$ v(x) \leq C\Big( \int_{0}^{\lambda |x|} + \int_{\lambda |x|}^{\infty} \Big) \Big( \frac{\int_{B_{t}(x)} |y|^{\sigma_2}u^{p}(y) \,dy}{t^{n-\beta\gamma}} \Big)^{\frac{1}{\gamma-1}} \,\frac{dt}{t} = C(I_1 + I_2). $$
For large $|x|$, since $u(x) \simeq |x|^{-\frac{n-\beta\gamma}{\gamma - 1}}$ and  
$$ p\frac{n-\beta\gamma}{(\gamma - 1)^2} - \frac{\sigma_2 + \beta\gamma}{\gamma - 1} = \frac{n-\beta\gamma}{\gamma - 1},$$
we have that
$$ I_1 \leq C |x|^{-p\frac{n-\beta\gamma}{(\gamma - 1)^2} + \frac{\sigma_2}{\gamma - 1}} \int_{0}^{\lambda |x|} t^{\frac{\beta\gamma}{\gamma - 1} } \frac{dt}{t} \leq C|x|^{-p\frac{n-\beta\gamma}{(\gamma - 1)^2} + \frac{\sigma_2 + \beta\gamma}{\gamma - 1}} \leq C|x|^{-\frac{n-\beta\gamma}{\gamma - 1}}. $$
Thus,
\begin{equation}\label{I1}
\lim_{|x|\rightarrow \infty} |x|^{\frac{n-\beta\gamma}{\gamma - 1}}(\ln |x|)^{-\frac{1}{\gamma - 1}}I_1 = 0.
\end{equation}
Likewise, choose a sufficiently large $R>0$. Then for large $|x|$ we have that
\begin{align*}
I_2 \leq {} & C\int_{\lambda |x|}^{\infty} \Big( \frac{\int_{B_{t}(x)} |y|^{\sigma_2}u^{p}(y) \,dy}{t^{n-\beta\gamma}} \Big)^{\frac{1}{\gamma-1}} \,\frac{dt}{t} \\
\leq {} & C\int_{\lambda |x|}^{\infty} \Big( \frac{\int_{B_{R}(0) } |y|^{\sigma_2}u^{p}(y) \,dy + \int_{B_{t+|x|}(0) \backslash B_{R}(0) } |y|^{\sigma_2}u^{p}(y) \,dy}{t^{n-\beta\gamma}} \Big)^{\frac{1}{\gamma-1}} \,\frac{dt}{t} \\
\leq {} & C\int_{\lambda |x|}^{\infty} \Big( \frac{C_1 + C_2 \int_{1}^{t + |x|} r^{n- \sigma_2 -p(\frac{n-\beta\gamma}{\gamma - 1})} \frac{dr}{r} }{t^{n-\beta\gamma}} \Big)^{\frac{1}{\gamma-1}} \,\frac{dt}{t} \\
\leq {} & C\int_{\lambda |x|}^{\infty} \Big( \frac{C_1 + C_2 \int_{1}^{t + |x|} r^{-1} \,dr }{t^{n-\beta\gamma}} \Big)^{\frac{1}{\gamma-1}} \,\frac{dt}{t} 
\leq C\int_{\lambda |x|}^{\infty} \Big( \frac{\ln t}{t^{n-\beta\gamma}} \Big)^{\frac{1}{\gamma-1}} \,\frac{dt}{t}.
\end{align*}
Combining this estimate with identity \eqref{lambda limit} and sending $\lambda \rightarrow 1$ yields
\begin{equation}\label{I2}
\lim_{|x| \rightarrow \infty} \frac{|x|^{\frac{n-\beta\gamma}{\gamma - 1}} }{(\ln |x|)^{\frac{1}{\gamma - 1} } } I_2 \leq C.
\end{equation}
Hence, \eqref{I1} and \eqref{I2} imply
\begin{equation}\label{I3}
\lim_{|x| \rightarrow \infty} \frac{|x|^{\frac{n-\beta\gamma}{\gamma - 1}} }{(\ln |x|)^{\frac{1}{\gamma - 1} } } v(x) \leq C.
\end{equation}
Notice that for any $\lambda > 1$, $B_{t - |x|}(0) \subset B_{t}(x)$ if $t > \lambda|x|$. For a proper choice of $R>0$, Proposition \ref{fast lower bound} implies that
\begin{align*}
v(x) \geq {} & c\int_{\lambda |x|}^{\infty} \Big( \frac{\int_{B_{t-|x|}(0)\backslash B_{R}(0) } |y|^{\sigma_2}u^{p}(y) \,dy}{t^{n-\beta\gamma}} \Big)^{\frac{1}{\gamma-1}} \,\frac{dt}{t} \\
\geq {} & c\int_{\lambda |x|}^{\infty} \Big( \frac{\int_{R}^{t - |x|} r^{n+\sigma_2 - p(\frac{n-\beta\gamma}{\gamma - 1}) } \frac{dr}{r}}{t^{n-\beta\gamma}} \Big)^{\frac{1}{\gamma-1}} \,\frac{dt}{t} \\
\geq {} & c\int_{\lambda |x|}^{\infty} \Big( \frac{\ln t}{t^{n-\beta\gamma}} \Big)^{\frac{1}{\gamma-1}} \,\frac{dt}{t}.
\end{align*}
Thus, applying identity \eqref{lambda limit} to this then sending $\lambda \rightarrow 1$ yields
\begin{equation}\label{I4}
\lim_{|x| \rightarrow \infty} \frac{|x|^{\frac{n-\beta\gamma}{\gamma - 1}} }{(\ln |x|)^{\frac{1}{\gamma - 1} } } v(x) \geq c > 0.
\end{equation}
Hence, \eqref{I3} and \eqref{I4} imply the desired result.
\end{proof}

\begin{proposition}\label{fast prop 4}
If $p(\frac{n-\beta\gamma}{\gamma - 1}) - \sigma_2 < n$, then $ v(x) \simeq |x|^{-\frac{p(\frac{n-\beta\gamma}{\gamma - 1}) - (\beta\gamma + \sigma_2)}{\gamma - 1}}.$
\end{proposition}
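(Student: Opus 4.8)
The plan is to establish matching upper and lower bounds for $v(x)$ of the order $|x|^{-\mu}$, where $\mu = \frac{p(\frac{n-\beta\gamma}{\gamma-1}) - (\beta\gamma+\sigma_2)}{\gamma-1}$, mirroring the structure of the proof of Proposition \ref{fast prop 3} but now in the regime $p(\frac{n-\beta\gamma}{\gamma-1}) - \sigma_2 < n$. Since $(u,v)$ is a bounded and decaying solution and we have already shown in Proposition \ref{fast prop 1} that $u(x) \simeq |x|^{-\frac{n-\beta\gamma}{\gamma-1}}$, the weight $|y|^{\sigma_2} u^p(y)$ behaves like $|y|^{\sigma_2 - p\frac{n-\beta\gamma}{\gamma-1}}$ for large $|y|$, and the exponent condition guarantees that $\int_{B_t(0)} |y|^{\sigma_2} u^p(y)\,dy$ now grows like $t^{\,n + \sigma_2 - p\frac{n-\beta\gamma}{\gamma-1}}$ (the integral is \emph{divergent} at infinity, in contrast to the convergent case of Proposition \ref{fast prop 2} and the borderline-logarithmic case of Proposition \ref{fast prop 3}).

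For the upper bound, I would split $v(x) \leq C(I_1 + I_2)$ with $I_1 = \int_0^{|x|/2}$ and $I_2 = \int_{|x|/2}^\infty$ of the Wolff integrand for $|y|^{\sigma_2} u^p$. On $I_1$, for $y \in B_t(x)$ with $t \leq |x|/2$ we have $|y| \simeq |x|$, so $u^p(y) \simeq |x|^{-p\frac{n-\beta\gamma}{\gamma-1}}$ and the $t$-integral contributes $\int_0^{|x|/2} t^{\frac{\beta\gamma}{\gamma-1}}\frac{dt}{t} \simeq |x|^{\frac{\beta\gamma}{\gamma-1}}$, giving $I_1 \lesssim |x|^{(\sigma_2 - p\frac{n-\beta\gamma}{\gamma-1} + \beta\gamma)/(\gamma-1)} = |x|^{-\mu}$. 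On $I_2$, I would bound $\int_{B_t(x)} |y|^{\sigma_2} u^p(y)\,dy \leq C_1 + C_2\int_1^{t+|x|} r^{n+\sigma_2 - p\frac{n-\beta\gamma}{\gamma-1}}\frac{dr}{r} \leq C(t+|x|)^{n+\sigma_2 - p\frac{n-\beta\gamma}{\gamma-1}}$ (using $n + \sigma_2 - p\frac{n-\beta\gamma}{\gamma-1} > 0$), so the integrand is $\lesssim \big((t+|x|)^{n+\sigma_2 - p\frac{n-\beta\gamma}{\gamma-1}}/t^{n-\beta\gamma}\big)^{1/(\gamma-1)}$; since $t \geq |x|/2$ we have $t + |x| \simeq t$, and the exponent of $t$ in the integrand is $\frac{1}{\gamma-1}(n+\sigma_2 - p\frac{n-\beta\gamma}{\gamma-1} - (n-\beta\gamma)) - 1 = -\frac{\mu + 1}{\cdots}$, which one checks is strictly less than $-1$ precisely because $q_0 + p_0 \le \frac{n-\beta\gamma}{\gamma-1}$ forces $\mu > 0$ and more, so $I_2 \lesssim |x|^{-\mu}$. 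For the lower bound, using $B_{t-|x|}(0) \subset B_t(x)$ for $t > 2|x|$ together with Proposition \ref{fast lower bound} (or just $u \simeq |x|^{-\frac{n-\beta\gamma}{\gamma-1}}$), I get $v(x) \geq c\int_{2|x|}^\infty \big(\int_R^{t-|x|} r^{n+\sigma_2 - p\frac{n-\beta\gamma}{\gamma-1}}\frac{dr}{r}\big)^{1/(\gamma-1)} t^{-\frac{n-\beta\gamma}{\gamma-1}}\frac{dt}{t} \geq c\int_{2|x|}^\infty (t-|x|)^{\frac{n+\sigma_2 - p\frac{n-\beta\gamma}{\gamma-1}}{\gamma-1}} t^{-\frac{n-\beta\gamma}{\gamma-1}}\frac{dt}{t} \gtrsim |x|^{-\mu}$.

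The main obstacle I anticipate is verifying the convergence of the tail integral $I_2$ — i.e., confirming that the $t$-exponent $\frac{1}{\gamma-1}\big(n + \sigma_2 - p\frac{n-\beta\gamma}{\gamma-1} - (n-\beta\gamma)\big) - 1$ is genuinely $< 0$ (so that $I_2$ is finite and of the right order rather than divergent), and similarly that the lower-bound integral converges. This is exactly where the non-subcriticality hypothesis \eqref{non-subcritical}, equivalently $q_0 + p_0 \le \frac{n-\beta\gamma}{\gamma-1}$, enters: it should be algebraically equivalent (after using $u \simeq |x|^{-(n-\beta\gamma)/(\gamma-1)}$) to the statement $p\frac{n-\beta\gamma}{\gamma-1} - (\beta\gamma+\sigma_2) < n - \beta\gamma$, i.e. $\mu < \frac{n-\beta\gamma}{\gamma-1}$, which makes the exponent negative. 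I would isolate this as a short arithmetic lemma at the start of the proof. Once that inequality is in hand, both the $I_2$ estimate and the lower bound are routine, and combining all the pieces yields $v(x) \simeq |x|^{-\mu}$ as claimed, completing the proof (and, together with Propositions \ref{fast prop 1}--\ref{fast prop 3}, the implication (i) $\Rightarrow$ (ii) of Theorem \ref{improved fast theorem}).
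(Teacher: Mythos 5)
Your argument is essentially correct, but it follows a genuinely different route from the paper's proof. The paper splits the Wolff integral over $\Omega_1 = [\,|x|-R,\,|x|+R\,]$ (a fixed-width window) versus $\Omega_1^c$, shows the $\Omega_1$ piece is $o(|x|^{-\mu})$, and extracts the exact rate from the $\Omega_1^c$ piece via a change of variables $z = y/|x|$, $s = t/|x|$ together with a separate convergence lemma (the integral \eqref{J34} with a unit vector $e$). You instead do the more standard near/far split at $t = |x|/2$, read off the rate $|x|^{-\mu}$ directly from the near piece $I_1$ by the local estimate $|y| \simeq |x|$, and bound the tail $I_2$ by the same rate. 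Your route avoids the scaling argument and the auxiliary lemma entirely and is if anything more elementary; the paper's route has the advantage of making the sharp constant in the limit visible. Your lower bound (restricting to $t \ge 2|x|$, using $B_{t-|x|}(0) \subset B_t(x)$ and Proposition \ref{fast lower bound}) matches the paper's lower bound in spirit.

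One logical slip worth fixing: you attribute the convergence of the tail integral $I_2 \simeq \int_{|x|/2}^\infty t^{-\mu}\,\frac{dt}{t}$ to the non-subcriticality hypothesis \eqref{non-subcritical}, and then further conflate that with the proposition's standing hypothesis $\mu < \frac{n-\beta\gamma}{\gamma-1}$. Neither is the right reason. The hypothesis $p\frac{n-\beta\gamma}{\gamma-1} - \sigma_2 < n$ (equivalently $\mu < \frac{n-\beta\gamma}{\gamma-1}$) is what makes the \emph{inner} radial integral $\int_1^{t+|x|} r^{\,n+\sigma_2 - p\frac{n-\beta\gamma}{\gamma-1}}\,\frac{dr}{r}$ diverge at infinity and behave like $(t+|x|)^{\,n+\sigma_2 - p\frac{n-\beta\gamma}{\gamma-1}}$. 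What makes the \emph{outer} $t$-integral converge is $\mu > 0$, which is a separate fact; it does not follow from \eqref{non-subcritical} as an algebraic identity, but rather from the standing assumption that $(u,v)$ is a positive solution: if $\mu \le 0$ then the same lower-bound computation (or the iteration in the proof of Theorem \ref{Liouville}) forces $v \equiv \infty$. You should state $\mu > 0$ as a consequence of finiteness of $v$ together with Proposition \ref{fast prop 1}, rather than of non-subcriticality. With that correction, the proof is complete.
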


\begin{proof}
Fix some $R>0$ and for $|x| > 2R$, write
$$ v(x) \leq C\Big( \int_{\Omega_1} + \int_{\Omega_{1}^{c}} \Big) \Big( \frac{\int_{B_{t}(x)} |y|^{\sigma_2}u^{p}(y) \,dy}{t^{n-\beta\gamma}} \Big)^{\frac{1}{\gamma-1}} \,\frac{dt}{t} = C(J_1 + J_2),$$
where $\Omega_{1} = [|x| - R, |x| + R]$. Then we claim that
\begin{enumerate}[(a)]
\item $\lim_{|x| \rightarrow \infty} |x|^{\frac{p(\frac{n-\beta\gamma}{\gamma - 1}) - (\beta\gamma + \sigma_2)}{\gamma - 1}}J_1 = 0$,

\item $\lim_{|x| \rightarrow \infty} |x|^{\frac{p(\frac{n-\beta\gamma}{\gamma - 1}) - (\beta\gamma + \sigma_2)}{\gamma - 1}}J_2 = C$,
\end{enumerate}
and the desired result follows once we prove this.

(a) Indeed, $B_{t}(x) \subset B_{2t + R}(0)$ if $t \in \Omega_1$ and so Proposition \ref{fast prop 1} yields
$$J_1 \leq C \int_{\Omega_1} \Big( \frac{\int_{0}^{2t + R} r^{n + \sigma_2 - p(\frac{n-\beta\gamma}{\gamma - 1}) } \frac{dr}{r} }{t^{n-\beta\gamma}} \Big)^{\frac{1}{\gamma-1}} \,\frac{dt}{t} \leq C|x|^{-\frac{p(\frac{n-\beta\gamma}{\gamma - 1}) - (\beta\gamma + \sigma_2)}{\gamma - 1} - 1}$$
and part (a) follows accordingly.

(b) To show this part, we first verify that
\begin{equation}\label{J34}
 \int_{0}^{\infty} \Big( \frac{\int_{B_{t}(e)} |y|^{\sigma_2 - p(\frac{n-\beta\gamma}{\gamma - 1}) } \,dy }{t^{n-\beta\gamma}} \Big)^{\frac{1}{\gamma-1}} \,\frac{dt}{t} < \infty,
\end{equation}
where $e$ is any unit vector. To show this, let $c \in (0,1)$ and consider the splitting 
$$  J_3 + J_4 = \Big( \int_{0}^{c} + \int_{c}^{\infty} \Big) \Big( \frac{\int_{B_{t}(e)} |y|^{\sigma_2 - p(\frac{n-\beta\gamma}{\gamma - 1}) } \,dy }{t^{n-\beta\gamma}} \Big)^{\frac{1}{\gamma-1}} \,\frac{dt}{t}.$$
Indeed,
\begin{align*}
J_{3} \leq {} & \int_{0}^{c} \Big( \frac{\int_{B_{t}(e)} |y|^{\sigma_2 - p(\frac{n-\beta\gamma}{\gamma - 1}) } \,dy }{t^{n-\beta\gamma}} \Big)^{\frac{1}{\gamma-1}} \,\frac{dt}{t} 
\leq C \int_{0}^{c} \Big( \frac{ |B_{t}(e)|}{t^{n-\beta\gamma}} \Big)^{\frac{1}{\gamma-1}} \,\frac{dt}{t} \\
\leq {} & \int_{0}^{c} t^{\frac{\beta\gamma}{\gamma - 1}} \frac{dt}{t} < \infty,
\end{align*}
since $y \in B_{t}(e)$ ensures that $1 - c < |y| < 1+c$. We can find a suitably large $R> 0$ so that
\begin{align*}
J_{4} \leq {} & C\int_{c}^{\infty} \Big( \frac{\int_{B_{Rt}(0)} |y|^{\sigma_2 - p(\frac{n-\beta\gamma}{\gamma - 1}) } \,dy }{t^{n-\beta\gamma}} \Big)^{\frac{1}{\gamma-1}} \,\frac{dt}{t} \\
\leq {} & C\int_{c}^{\infty} \Big( \frac{\int_{0}^{Rt} r^{n + \sigma_2 - p(\frac{n-\beta\gamma}{\gamma - 1}) } \frac{dr}{r} }{t^{n-\beta\gamma}} \Big)^{\frac{1}{\gamma-1}} \,\frac{dt}{t} \\
\leq {} & C\int_{c}^{\infty} t^{-\frac{p(\frac{n-\beta\gamma}{\gamma - 1}) - (\beta\gamma + \sigma_2)}{\gamma - 1}} \,\frac{dt}{t} < \infty.
\end{align*}
This completes the proof of the claim. Now we turn our attention to estimating the term $J_2$. Indeed, by setting $$\Omega_2 = [1 - R/|x|, 1 + R/|x|],$$ using the change of variables $$z = \frac{y}{|x|},\, s = \frac{t}{|x|},$$ and applying \eqref{J34}, we get that
\begin{align*}
J_2 \leq {} & C\int_{\Omega_{1}^{c}} \Big( \frac{\int_{B_{t}(x)} |y|^{-p(\frac{n-\beta\gamma}{\gamma-1}) + \sigma_2} \,dy}{t^{n-\beta\gamma}} \Big)^{\frac{1}{\gamma-1}} \,\frac{dt}{t} \\
\leq {} & C \int_{\Omega_{2}^{c}} \Big( \frac{\int_{B_{s}(x/|x|)} |z|^{-p(\frac{n-\beta\gamma}{\gamma-1})+\sigma_2} \,dz}{s^{n-\beta\gamma}} \Big)^{\frac{1}{\gamma-1}} \,\frac{ds}{s} |x|^{-\frac{p(\frac{n-\beta\gamma}{\gamma-1}) - (\beta\gamma + \sigma_2)}{\gamma-1}} \\
\leq {} & C|x|^{-\frac{p(\frac{n-\beta\gamma}{\gamma-1}) - (\beta\gamma + \sigma_2)}{\gamma-1}}.
\end{align*}
Likewise, using Proposition \ref{fast lower bound} combined with similar arguments as above, we can also show that
$$J_{2} \geq  c|x|^{-\frac{p(\frac{n-\beta\gamma}{\gamma-1}) - (\beta\gamma + \sigma_2)}{\gamma-1}}.$$
This proves part (b) and thus completes the proof of the proposition.
\end{proof}

\begin{proposition}\label{fast prop 5}
Suppose that $(u,v)$ is a bounded and decaying solution of system \eqref{Wolff}. If $(u,v)$ decays with the fast rates as $|x| \rightarrow \infty$, then it is an optimal integrable solution, i.e., $(u,v) \in L^{r}(\mathbb{R}^n) \times L^{s}(\mathbb{R}^n)$ for all $(r,s)$ satisfying condition \eqref{optimal integrability}. 
\end{proposition}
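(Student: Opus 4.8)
The plan is to verify directly, using the assumed fast decay rates, that the two integrals $\int_{\mathbb{R}^n} u^r\,dx$ and $\int_{\mathbb{R}^n} v^s\,dx$ converge for every $(r,s)$ satisfying \eqref{optimal integrability}. Since $(u,v)$ is bounded, integrability can only fail at infinity, so each integral splits as $\int_{B_R(0)} + \int_{\mathbb{R}^n\setminus B_R(0)}$ with the first term finite for any $R>0$; everything reduces to the tail. For the tail, polar coordinates turn the condition into a one-variable power integral: $\int_R^\infty t^{n-1-\theta r}\,dt < \infty$ iff $\theta r > n$, i.e. $r > n/\theta$, where $\theta$ is the decay exponent of the function in question.

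The first step is the $u$-component: by hypothesis $u(x)\simeq |x|^{-\frac{n-\beta\gamma}{\gamma-1}}$, so $u\in L^r(\mathbb{R}^n\setminus B_R)$ iff $r > \frac{n(\gamma-1)}{n-\beta\gamma}$, which is exactly the condition on $r$ in \eqref{optimal integrability}. The second step is the $v$-component, and here the three cases in Theorem \ref{fast theorem}(c) / the definition of the fast rates must be handled separately. In the first case $p\big(\frac{n-\beta\gamma}{\gamma-1}\big)-\sigma_2 > n$ we have $v(x)\simeq |x|^{-\frac{n-\beta\gamma}{\gamma-1}}$, so $v\in L^s$ iff $s > \frac{n(\gamma-1)}{n-\beta\gamma}$, which matches the stated requirement on $s$ since in this regime the max in \eqref{optimal integrability} equals $\frac{n(\gamma-1)}{n-\beta\gamma}$. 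In the third case $p\big(\frac{n-\beta\gamma}{\gamma-1}\big)-\sigma_2 < n$ we have $v(x)\simeq |x|^{-\frac{p(\frac{n-\beta\gamma}{\gamma-1})-(\beta\gamma+\sigma_2)}{\gamma-1}}$, and $v\in L^s$ iff $s > \frac{n(\gamma-1)}{p(\frac{n-\beta\gamma}{\gamma-1})-(\beta\gamma+\sigma_2)}$, which is precisely the other candidate in the max in \eqref{optimal integrability}; one checks the sign conditions ($\beta\gamma+\sigma_2>0$ by \eqref{convention}, and $p(\frac{n-\beta\gamma}{\gamma-1})-\sigma_2<n$ guarantees this term dominates) so the requirement on $s$ is again exactly met.

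The only slightly delicate case is the borderline $p\big(\frac{n-\beta\gamma}{\gamma-1}\big)-\sigma_2 = n$, where $v(x)\simeq |x|^{-\frac{n-\beta\gamma}{\gamma-1}}(\ln|x|)^{\frac{1}{\gamma-1}}$. Then the tail integrand in polar coordinates is $t^{\,n-1-s\frac{n-\beta\gamma}{\gamma-1}}(\ln t)^{\frac{s}{\gamma-1}}$, and since a logarithmic factor is subpolynomial, $\int_R^\infty t^{\,n-1-s\frac{n-\beta\gamma}{\gamma-1}}(\ln t)^{\frac{s}{\gamma-1}}\,dt<\infty$ still holds for every $s > \frac{n(\gamma-1)}{n-\beta\gamma}$ (the strict inequality leaves a positive power margin that absorbs the logarithm); this can be seen by a crude bound $(\ln t)^{\frac{s}{\gamma-1}}\le C_\epsilon\, t^{\epsilon}$ with $\epsilon$ chosen small enough that $s\frac{n-\beta\gamma}{\gamma-1}-n-\epsilon>0$. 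Since in this regime the max in \eqref{optimal integrability} again equals $\frac{n(\gamma-1)}{n-\beta\gamma}$, the condition on $s$ is exactly what is needed. Combining the three cases with the $u$-estimate shows $(u,v)\in L^r(\mathbb{R}^n)\times L^s(\mathbb{R}^n)$ for all $(r,s)$ satisfying \eqref{optimal integrability}, which is the assertion. The main (minor) obstacle is simply the bookkeeping that in each of the three cases the integrability threshold produced by the decay rate coincides with the $\max$ appearing in \eqref{optimal integrability}; the logarithmic borderline case requires the one extra observation above but presents no real difficulty.
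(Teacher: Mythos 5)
Your proof is correct and follows essentially the same route as the paper: split each integral into the bounded ball part (finite since $(u,v)$ is bounded) and the tail, pass to polar coordinates, and handle the three regimes for $v$ separately, including the same $(\ln t)^{s/(\gamma-1)}\leq C_\epsilon t^\epsilon$ bound in the logarithmic borderline case. The only addition over the paper's write-up is your explicit bookkeeping that the decay rate in each regime matches the corresponding branch of the $\max$ in \eqref{optimal integrability}, which is a harmless and useful clarification.
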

\begin{proof}
Suppose that $(u,v)$ decays with the fast rates as $|x| \rightarrow \infty$ and $(r,s)$ satisfies \eqref{optimal integrability}. 

\begin{enumerate}[(i)]
\item If $u(x)$ decays with the rate $\frac{n-\beta\gamma}{\gamma - 1}$, then we can find a suitably large $R>0$ so that
\begin{align*}
\int_{\mathbb{R}^n} u(x)^{r} \,dx \leq {} & \int_{B_{R}(0)} u(x)^{r} \,dx  + \int_{\mathbb{R}^n \backslash B_{R}(0)} u(x)^{r} \,dx \\
\leq {} & C_{1} + C_{2}\int_{\mathbb{R}^n \backslash B_{R}(0)} |x|^{-r\frac{n-\beta\gamma}{\gamma - 1}} \,dx
\leq C\int_{R}^{\infty} t^{n - r\frac{n-\beta\gamma}{\gamma - 1}} \, \frac{dt}{t} < \infty.
\end{align*}
Likewise, if $v(x)$ decays with the rate $\frac{n-\beta\gamma}{\gamma - 1}$, we can use similar arguments to show $v \in L^{s}(\mathbb{R}^n)$. 

\item Assume $v(x) \simeq |x|^{-\frac{n-\beta\gamma}{\gamma - 1}} (\ln |x|)^{\frac{1}{\gamma - 1}}$. For small $\epsilon > 0$, we can find $R>0$ such that
$$ (\ln |x|)^{\frac{s}{\gamma - 1}} \leq C|x|^{\epsilon} \,\text{ for }\, |x| > R. $$
Thus
$$ \int_{\mathbb{R}^n} v(x)^{s} \,dx \leq C_{1} + C_{2}\int_{R}^{\infty} t^{n - s\frac{n-\beta\gamma}{\gamma - 1} + \epsilon} \,\frac{dt}{t} < \infty,$$
since $n - s\frac{n-\beta\gamma}{\gamma - 1} + \epsilon < 0$ provided that $\epsilon$ is chosen to be small enough.

\item Assume
$$v(x) \simeq |x|^{-p\frac{n-\beta\gamma}{(\gamma - 1)^2} + \frac{\sigma_2 + \beta\gamma}{\gamma - 1}}.$$
Condition \eqref{optimal integrability} ensures that
$$n - \frac{s}{\gamma - 1}\Big( p(\frac{n-\beta\gamma}{\gamma - 1}) - (\beta\gamma + \sigma_2) \Big) < 0,$$
and thus
$$\int_{\mathbb{R}^n} v(x)^{s} \,dx \leq C_{1} + C_{2} \int_{R}^{\infty} t^{n - \frac{s}{\gamma - 1}( p(\frac{n-\beta\gamma}{\gamma - 1}) - (\beta\gamma + \sigma_2) )} \,\frac{dt}{t} < \infty.$$
\end{enumerate}
In any case, $(u,v) \in L^{r}(\mathbb{R}^n) \times L^{s}(\mathbb{R}^n)$, and this completes the proof.

\end{proof}

\begin{proof}[Proof of Theorem \ref{improved fast theorem}]
This follows directly from Propositions \ref{fast prop 1} to \ref{fast prop 5}.
\end{proof}

\section{Systems of quasilinear differential equations}\label{section5}

\begin{proof}[Proof of Corollary \ref{quasilinear Liouville}]

On the contrary, assume that $(u,v)$ is a positive solution of system \eqref{quasilinear system} satisfying 
$$\inf_{\mathbb{R}^n} u = \inf_{\mathbb{R}^n} v = 0.$$ 
Since the coefficients are double bounded, the global estimate \eqref{KM} ensures there is a positive constant $C$ such that
\begin{align*}
C^{-1}W_{1,\gamma}(|y|^{\sigma_1}v^{q})(x) \leq u(x) \leq CW_{1,\gamma}(|y|^{\sigma_1}v^{q})(x),\\ \medskip
C^{-1}W_{1,\gamma}(|y|^{\sigma_2}u^{p})(x) \leq v(x) \leq CW_{1,\gamma}(|y|^{\sigma_2}u^{p})(x).
\end{align*}
From this, we clearly get two double bounded functions $c_{1}(x)$ and $c_{2}(x)$ such that
\begin{align*}
u(x) = c_{1}(x)W_{1,\gamma}(|y|^{\sigma_1}v^q)(x),\\ \medskip
v(x) = c_{2}(x)W_{1,\gamma}(|y|^{\sigma_2}u^p)(x),
\end{align*}
but this is impossible in view of Theorem \ref{Liouville}.
\end{proof}

\begin{proof}[Proofs of Corollaries \ref{slow quasilinear} and \ref{fast quasilinear improved}] 

Indeed, by virtue of estimate \eqref{KM}, there exist  double bounded coefficients $c_{1}(x)$ and $c_{2}(x)$ such that $(u,v)$ is a positive solution of the integral system
\begin{align*}
u(x) = c_{1}(x)W_{1,\gamma}(|y|^{\sigma_1}v^q)(x),\\ \medskip
v(x) = c_{2}(x)W_{1,\gamma}(|y|^{\sigma_2}u^p)(x).
\end{align*}
Then the results follow directly from Theorem \ref{slow theorem} and Theorem \ref{improved fast theorem}.
\end{proof}

\noindent{\bf Acknowledgment:} The author wishes to thank the anonymous referees of this paper. Their comments and suggestions have certainly improved the presentation and overall quality of this manuscript. Part of this work was completed while the author was visiting the School of Mathematical Sciences at the University of Science and Technology of China in Hefei, PRC and the Department of Applied Mathematics at Northwestern Polytechnical University in Xi'an, PRC. The author wishes to thank both institutions for their support and hospitality.


\end{document}